\newcommand{\RR}{\mathbb{R}}
\newcommand{\CC}{\mathbb{C}}
\newcommand{\NN}{\mathbb{N}}
\newcommand{\ZZ}{\mathbb{Z}}
\newcommand{\QQ}{\mathbb{Q}}
\newcommand{\EE}{\mathbb{E}}
\newcommand{\OO}{\mathcal{O}}
\newcommand{\Oo}{\mathcal{O}}
\newcommand{\Ss}{\mathcal{S}}
\newcommand{\Bo}{\widehat{\mathcal{B}}}
\newcommand{\RE}{ {\rm Re \,} }
\newtheorem{Tw}{Theorem}
\newtheorem{Stw}{Proposition}
\theoremstyle{definition}
\newtheorem{Df}{Definition}
\theoremstyle{remark}
\newtheorem{Uw}{Remark}
\newtheorem{Prz}{Example}
\begin{document}
\keywords{linear PDEs with constant coefficients, moment-PDEs, Borel summability, multisummability, maximal family of solutions, Stokes phenomenon,
hyperfunctions}
\subjclass[2010]{35C10, 35C20, 35E15, 40G10}
\title[The Stokes phenomenon for some moment PDEs]{The Stokes phenomenon for some moment partial differential equations}
\author{S{\l}awomir Michalik}
\address{Faculty of Mathematics and Natural Sciences,
College of Science\\
Cardinal Stefan Wyszy\'nski University\\
W\'oycickiego 1/3,
01-938 Warszawa, Poland\\
ORCiD: 0000-0003-4045-9548}
\email{s.michalik@uksw.edu.pl}
\urladdr{\url{http://www.impan.pl/~slawek}
}
\author{Bo\.{z}ena Tkacz}
\address{Faculty of Mathematics and Natural Sciences,
College of Science\\
Cardinal Stefan Wyszy\'nski University\\
W\'oycickiego 1/3,
01-938 Warszawa, Poland}
\email{bpodhajecka@o2.pl}

\begin{abstract}
We study the Stokes phenomenon for the solutions of general homogeneous linear moment partial differential 
equations with constant coefficients in two complex variables under condition that the Cauchy data are holomorphic
on the complex plane but finitely many singular or branching points with the appropriate growth condition at the infinity.
The main tools are the theory of summability and multisummability, and the theory of hyperfunctions. Using them we describe Stokes lines,
anti-Stokes lines, jumps across Stokes lines, and a maximal family of solutions.
\end{abstract}

\maketitle

\section{Introduction}
In this article, we generalise our results from \cite {Mic-Pod} concerning summability and Stokes phenomenon for the formal solutions
of the Cauchy problem for the complex heat equation. In the present paper, we consider the Cauchy problem for 
general homogeneous linear moment partial differential equation with constant coefficients in two complex variables $(t,z)$
\begin{equation}
\label{eq:1}
\begin{cases}
P(\partial_{m_1,t},\partial_{m_2,z})u=0&\\
\partial_{m_1,t}^j u(0,z)=\varphi_j(z)\in\OO(D),&j=0,\dots,N-1,
\end{cases}
\end{equation}
where $P(\lambda,\zeta)$ is a polynomial of two variables of degree $N$ with respect to $\lambda$. Here $\partial_{m_1,t}$ and
$\partial_{m_2,z}$ denote the formal moment differentiations introduced by W.~Balser and M.~Yoshino \cite{B-Y}, which generalise the usual and
fractional differentiations.

Such type of equations was previously investigated by the first author \cite{Mic7,Mic8,Mic11} and by A.~Lastra, S.~Malek and J.~Sanz  \cite{La-Ma-Sa},
mainly in the context of multisummability in a given direction. 

Now we use the similar methods as in the above mentioned papers to
the study of multisummable normalised formal solution $\widehat{u}$ of (\ref{eq:1}). 
It means that $\widehat{u}$ has to be multisummable in every direction but finitely many singular directions.  
For this reason we assume that
the Cauchy data have finitely many singular or branching points $z_0,\dots,z_n\in\CC\setminus\{0\}$ and are analytically continued to
$\CC\setminus\bigcup_{j=0}^n\{z_jt\colon t\geq 1\}$, and that satisfy the appropriate exponential growth condition at the infinity.
Observe that by the linearity of (\ref{eq:1}) it is sufficient to consider the case when there is exactly one such point,
say $z_0\in\CC\setminus\{0\}$. Therefore we only consider the case when $\varphi_j(z)\in\OO(\widetilde{\CC\setminus\{z_0\}})$.

Using such formal multisummable solution $\widehat{u}$, for any nonsingular admissible multidirection $\mathbf{d}$ we are able to construct its
multisum $u^{\mathbf{d}}$. This multisum is an actual solution of (\ref{eq:1}) as a holomorphic function in some sectorial
neighbourhood of the origin.

The main purpose of this article is the description of these
actual solutions and the study of the relations between them. To this end we introduce the concept of maximal family of solutions.
It is defined as the whole family of actual solutions, which can be obtained by the method of multisummability. 

The relations between solutions are studied in the context of
the Stokes phenomenon. It means that we find the Stokes lines, which separate different actual solutions constructed from 
the same multisummable formal power series solution.
We also calculate the differences between actual solutions on such lines, which are called jumps across the Stokes lines.
To study such jumps we apply the Laplace type hyperfunctions supported on the Stokes line.

In this way we get the main result of the paper about the maximal family of solutions and the Stokes phenomenon for (\ref{eq:1}), which is
given in Theorem \ref{th:multisummability}.

In the special case when $\partial_{m_1,t}$ and $\partial_{m_2,z}$ are replaced by $\partial_{t}$ and $\partial_{z}$ we get the description
of the Stokes phenomenon for general linear PDEs with constant coefficients.

In this sense the paper gives the application of theory of summability for PDEs to the description of
maximal family of solutions and to the study of Stokes phenomenon for such equations.

Let us recall that the theory of summability of the formal solutions of PDEs has been recently intensively developed by such authors as
M.~Hibino \cite{H}, K.~Ichinobe and M.~Miyake \cite{I-Miy},
K.~Ichinobe \cite{I3,I4}, A.~Lastra, S.~Malek and J.~Sanz \cite{La-Ma-Sa2}, P.~Remy \cite{Re}, H.~Tahara and H.~Yamazawa \cite{T-Y},
H.~Yamazawa and M.~Yoshino \cite{Ya-Yo},
M.~Yoshino \cite{Yo,Yo2},  and others.

The paper is organized as follows. Section 2 consists of basic notations. In Section 3 we recall Balser's theory of moment summability.
In particular, we introduce kernel functions and connected with them moment functions, Gevrey order, moment Borel and Laplace transforms,
$k$-summability and multisummability. In the next section we recall the concept of moment differential operators and their generalisation
to pseudodifferential operators. In Section 5 we recall the notion of Stokes phenomenon. We define Stokes lines and jumps across them
for multisummable formal power series. We also introduce Laplace type hyperfunction on Stokes lines, which allows us to describe
these jumps. In Section 6 we introduce the idea of a maximal family of normalised actual solutions of non-Kowalevskian equation. 
We describe such family of solutions of (\ref{eq:1}) in the case when formal solution $\widehat{u}$ is multisummable (Theorem \ref{th:general}).
In Section 7 we recall how to reduce the Cauchy problem (\ref{eq:1}) to a family of the Cauchy problems of simple pseudodifferential equations.
Next, using the theory of moment summability, we find the integral representation of actual solutions of these simple pseudodifferential equations
in the case when their formal solutions are summable (Proposition \ref{pr:sum}). It allows us to describe a maximal family of solutions
of simple equations, Stokes lines, and jumps across them (Theorem \ref{th:summability}). Finally we return to the equation (\ref{eq:1})
and using the theory of multisummability we get the main result of the paper, i.e. the description of a maximal family of solution,
Stokes lines and jumps across them for the equation (\ref{eq:1}), which is given in Theorem \ref{th:multisummability}. 
In the last section we present a few examples of special cases of moment partial differential equations with constant coefficients, where by using hyperfunctions we derive the form of jumps across obtained Stokes lines.

\section{Notation}
A~\emph{sector $S$ in a direction $d\in\RR$ with an opening $\alpha>0$ and a radius $R\in\RR_+$} in the universal covering space $\tilde{\CC}$ of
$\CC\setminus\{0\}$ is defined by
\begin{displaymath}
S=S_d(\alpha,R)=\{z\in\tilde{\CC}\colon\ z=r\*e^{i\phi},\ r\in(0,R),\ \phi\in(d-\alpha/2,d+\alpha/2)\}.
\end{displaymath}
 This sector is called \emph{unbounded} if $R=+\infty$ and the notation $S=S_d(\alpha)$ will be used. If the opening $\alpha$ is not essential,
 the sector $S_d(\alpha)$ is denoted briefly by $S_d$.
 
 A complex disc $D_r$ in $\CC$ with a radius $r>0$ is a set of the form 
 \begin{displaymath}
 D_r=\{z\in\CC:|z|<r\}.
 \end{displaymath}
 In case that the radius $r$ is not essential, the set $D_r$ will be designated  briefly by $D$. We also denote briefly a \emph{disc-sector}
 $S_d(\alpha)\cup D$
 (resp. $S_d\cup D$) by $\widehat{S}_d(\alpha)$ (resp. $\widehat{S}_d$).
 
 If a function $f$ is holomorphic on a domain $G\subset\CC^n$, then it will be denoted by $f\in\OO(G)$.
 Analogously, the space of holomorphic functions of the variable $z^{1/\gamma}=(z_1^{1/\gamma_1},\dots,z_n^{1/\gamma_n})$
 on a domain $G\subset\CC^n$ is denoted by $\OO_{1/\gamma}(G)$, where $z=(z_1,\dots,z_n)\in\CC^n$, $\gamma=(\gamma_1,\dots,\gamma_n)\in\NN^n$
 and $1/\gamma=(1/\gamma_1,\dots,1/\gamma_n)$.
 In other words $f\in\OO_{1/\gamma}(G)$ if and only if the function $w\mapsto f(w^{\gamma})$ is analytic for every
 $w^{\gamma}=(w_1^{\gamma_1},\dots,w_n^{\gamma_n})\in G$.
 
 More generally, if $\EE$ denotes a complex Banach space with a norm $\|\cdot\|_{\EE}$, then
by $\Oo(G,\EE)$ (resp. $\Oo_{1/\gamma}(G,\EE)$) we shall denote the set of all $\EE$-valued  
holomorphic functions (resp. holomorphic functions of the variables $z^{1/\gamma}$)
on a domain $G\subseteq\CC^n$.
For more information about functions with values in Banach spaces we refer the reader to \cite[Appendix B]{B2}. 
In the paper, as a Banach space $\EE$ we will
take the space of complex numbers $\CC$ (we abbreviate $\Oo(G,\CC)$ to $\Oo(G)$ and
$\Oo_{1/\gamma}(G,\CC)$ to $\Oo_{1/\gamma}(G)$)
or the space of functions $E_{1/\gamma}(D):=\Oo_{1/\gamma}(D)\cap C(\overline{D})$ equipped with the norm
$\|\varphi\|_{E_{1/\gamma}(D)}:=\max_{z\in\overline{D}}|\varphi(z)|$.
 
The space of formal power series $\sum_{n=0}^{\infty} a_{n} t^{n}$ with $a_n\in\EE$ is denoted by $\EE[[t]]$.

We use the ``hat'' notation ($\widehat{u}$, $\widehat{u}_i$, $\widehat{f}$) to denote the formal power series. If the formal power series
$\widehat{u}$ (resp. $\widehat{u}_i$, $\widehat{f}$) is convergent, we denote its sum by $u$ (resp. $u_i$, $f$).

\begin{Df}
Suppose $k\in\RR$, $S$ is an unbounded sector and $u\in\OO_{1/\gamma}(S,\EE)$. The function $u$ is of \emph{exponential growth of order at most $k$},
if for every proper subsector $S^*\prec S$ (i.e. $\overline{S^{*}}\setminus\{0\} \subseteq S$) there exist constants $C_1, C_2>0$ such that
$\|u(x)\|_{\EE}\le C_1\*e^{C_2|x|^{k}}$ for every $x\in S^*$. If this is so, one can write $u\in\OO_{1/\gamma}^{k}(S,\EE)$
and $u\in\OO_{1/\gamma}^k(\CC,\EE)$ for $S=\CC$.
\par
More generally, if $G$ is an unbounded domain in $\CC^n$ and $u\in\OO_{1/\gamma}(G,\EE)$, then $u\in\OO_{1/\gamma}^{k}(G,\EE)$ if
for every set $G^*$ satisfying $\overline{G^*}\subset\mathrm{Int}\,G$ there exist constants $C_1, C_2>0$ such that $\|u(x)\|_{\EE}\le C_1\*e^{C_2|x|^{k}}$ for every $x\in G^*$.
\end{Df}

\section{Kernel and moment functions, k-summability and multisummability}
In this section we recall the notion of moment methods introduced by Balser \cite{B2}.
It allows us to describe moment Borel transforms, Gevrey order, Borel summability and multisummability
\begin{Df}[see {\cite[Section 5.5]{B2}}]
    \label{df:moment}
    A pair of functions $e_m$ and $E_m$ is said to be \emph{kernel functions of order $k$} ($k>1/2$) if
    they have the following properties:
   \begin{enumerate}
    \item[1.] $e_m\in\Oo(S_0(\pi/k))$, $e_m(z)/z$ is integrable at the origin, $e_m(x)\in\RR_+$ for $x\in\RR_+$ and
     $e_m$ is exponentially flat of order $k$ as $z\to\infty$ in $S_0(\pi/k)$ (i.e. for every $\varepsilon > 0$ there exist ${A,B > 0}$
     such that $|e_m(z)|\leq A e^{-(|z|/B)^k}$ for $z\in S_0(\pi/k-\varepsilon)$).
    \item[2.] $E_m\in\Oo^{k}(\CC)$ and $E_m(1/z)/z$ is integrable at the origin in $S_{\pi}(2\pi-\pi/k)$.
    \item[3.] The connection between $e_m$ and $E_m$ is given by the \emph{corresponding moment function
    $m$ of order $1/k$} as follows.
     The function $m$ is defined by the Mellin transform of $e_m$
     \begin{gather}
      \label{eq:e_m}
      m(u):=\int_0^{\infty}x^{u-1} e_m(x)dx \quad \textrm{for} \quad \RE u \geq 0
     \end{gather}
     and the kernel function $E_m$ has the power series expansion
     \begin{gather}
      \label{eq:E_m}
      E_m(z)=\sum_{n=0}^{\infty}\frac{z^n}{m(n)} \quad  \textrm{for} \quad z\in\CC.
     \end{gather}
    \item[4.] Additionally we assume that the corresponding moment function satisfies the normalisation property $m(0)=1$.
   \end{enumerate}
   \end{Df}
   
   \begin{Uw}
    Observe that by the inverse Mellin transform and by (\ref{eq:E_m}), the moment function $m$
    uniquely determines the kernel functions $e_m$ and $E_m$.
   \end{Uw}

    In case $k\leq 1/2$ the set $S_{\pi}(2\pi-\pi/k)$ is not defined,
    so the second property in Definition \ref{df:moment} can not be satisfied. It means that we
    must define the kernel functions of order $k\leq 1/2$ and the corresponding moment functions
    in another way. To this end we use the ramification at $z=0$.
    
    \begin{Df}[see {\cite[Section 5.6]{B2}}]
     \label{df:small}
     A function $e_m$ is called a \emph{kernel function of order $k>0$} if we
     can find a pair of kernel functions $e_{\widetilde{m}}$ and $E_{\widetilde{m}}$ of
     order $pk>1/2$ (for some $p\in\NN$) so that
     \begin{gather*}
      e_m(z)=e_{\widetilde{m}}(z^{1/p})/p \quad \textrm{for} \quad z\in S_0(\pi/k).
     \end{gather*}
     For a given kernel function $e_m$ of order $k>0$ we define the
     \emph{corresponding moment function $m$ of order $1/k>0$} by (\ref{eq:e_m}) and
     the \emph{kernel function $E_m$ of order $k>0$} by (\ref{eq:E_m}).
    \end{Df}
    
    \begin{Uw}
     Observe that by Definitions \ref{df:moment} and \ref{df:small} we have
     \begin{eqnarray*}
      m(u)=\widetilde{m}(pu) & \textrm{and} &
      E_m(z)=\sum_{j=0}^{\infty}\frac{z^j}{m(j)}=\sum_{j=0}^{\infty}\frac{z^j}{\widetilde{m}(jp)}.
     \end{eqnarray*}
    \end{Uw}

As in \cite{Mic8}, we extend the notion of moment functions to real orders.
\begin{Df}
 \label{df:moment_general}
     We say that $m$ is a \emph{moment function of order $1/k<0$} if $1/m$ is a moment function of order $-1/k>0$.
     \par
     We say that $m$ is a \emph{moment function of order $0$} if there exist moment functions $m_1$ and $m_2$ of the same order $1/k>0$ such that $m=m_1/m_2$.
\end{Df}

By Definition \ref{df:moment_general} and by \cite[Theorems 31 and 32]{B2} we have
\begin{Stw}
 Let $m_1$, $m_2$ be moment functions of orders $s_1,s_2\in\RR$ respectively. Then
 \begin{itemize}
  \item $m_1m_2$ is a moment function of order $s_1+s_2$,
  \item $m_1/m_2$ is a moment function of order $s_1-s_2$.
 \end{itemize}
\end{Stw}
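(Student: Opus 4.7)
The plan is to reduce the general real-order statement to Balser's Theorems 31 and 32, which handle products and quotients of moment functions of positive orders (the latter giving a moment function whose order is the difference, provided that difference is positive). The mechanism is to represent each $m_i$ as a quotient of two positive-order moment functions and then manipulate these quotients formally.

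First I would establish an auxiliary normal form: for $i=1,2$ there exist moment functions $a_i$ and $b_i$ of positive orders $\alpha_i$ and $\beta_i$ respectively, such that $m_i = a_i/b_i$ and $\alpha_i - \beta_i = s_i$. The three cases of Definition \ref{df:moment_general} are handled as follows. If $s_i > 0$, pick any moment function $c_i$ of positive order $r_i$ and set $a_i := m_i\,c_i$, $b_i := c_i$; by Balser's Theorem 31, $a_i$ is a moment function of order $s_i + r_i > 0$. If $s_i < 0$, then $\widetilde{m}_i := 1/m_i$ has positive order $-s_i$, and again picking $c_i$ of order $r_i > -s_i$ we set $a_i := c_i$, $b_i := \widetilde{m}_i\,c_i$, so that $b_i$ has positive order $-s_i + r_i > 0$ and the difference of orders is $r_i - (-s_i + r_i) = s_i$. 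If $s_i = 0$, the representation is immediate from the order-$0$ clause of Definition \ref{df:moment_general}.

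Having the normal form, I would then write
\begin{equation*}
m_1 m_2 = \frac{a_1 a_2}{b_1 b_2}, \qquad \frac{m_1}{m_2} = \frac{a_1 b_2}{b_1 a_2}.
\end{equation*}
By Balser's Theorem 31, the four expressions $a_1 a_2$, $b_1 b_2$, $a_1 b_2$, $b_1 a_2$ are moment functions of the positive orders $\alpha_1+\alpha_2$, $\beta_1+\beta_2$, $\alpha_1+\beta_2$, $\beta_1+\alpha_2$. It remains to conclude that the quotients represent moment functions of orders $s_1+s_2$ and $s_1-s_2$, respectively, and here I would split into the three subcases determined by the sign of the target order $s = s_1\pm s_2$. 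If $s > 0$ the numerator's positive order exceeds the denominator's, and Balser's Theorem 32 applies directly. If $s = 0$ the numerator and denominator have the same positive order, so Definition \ref{df:moment_general} gives a moment function of order $0$. If $s < 0$, I would flip the quotient, apply Theorem 32 to the reciprocal (which now has target positive order $-s$), and invoke the negative-order clause of Definition \ref{df:moment_general}.

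The only real subtlety, and thus the main obstacle, is keeping track of the case distinctions so that the correct clause of Definition \ref{df:moment_general} or Theorem 32 is used; once the quotient representation $m_i = a_i/b_i$ with both $a_i, b_i$ of positive order is in hand, no new analytic argument is required, only the algebra of orders. A minor point worth verifying is that the auxiliary $c_i$ can always be chosen (for example $c_i(u) = \Gamma(1 + u/k_i)$ for any $k_i > 0$ provides a standard supply of positive-order moment functions), ensuring that the reduction is genuinely available in all cases.
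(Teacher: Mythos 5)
Your argument is correct and follows exactly the route the paper intends: the paper states this proposition without a written proof, deriving it directly from Definition~\ref{df:moment_general} together with Balser's Theorems 31 and 32, which is precisely the reduction you carry out. Your write-up simply makes explicit the quotient normal form $m_i=a_i/b_i$ and the sign case distinctions that the paper leaves implicit, and these details check out.
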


\begin{Prz}
\label{ex:functions}
 For any $k>0$ the classical kernel functions and the corresponding moment function,
 satisfying Definition \ref{df:moment} or \ref{df:small}, are given by
 \begin{itemize}
  \item $e_m(z)=kz^{k}e^{-z^k}$,
  \item $m(u)=\Gamma(1+u/k)$,
  \item $E_m(z)=\sum_{j=0}^{\infty}\frac{z^j}{\Gamma(1+j/k)}=:\mathbf{E}_{1/k}(z)$, where $\mathbf{E}_{1/k}$ is the
    Mittag-Leffler function of index $1/k$.
\end{itemize}
They are used in the classical theory of $k$-summability.
\end{Prz}

\begin{Prz}
For any $s\in\RR$ we will denote by $\Gamma_s$ the function
\[
  \Gamma_s(u):=\left\{
  \begin{array}{lll}
    \Gamma(1+su) & \textrm{for} & s \geq 0\\
    1/\Gamma(1-su) & \textrm{for} & s < 0.
  \end{array}
  \right.
\]
Observe that by Example \ref{ex:functions} and Definition \ref{df:moment_general}, $\Gamma_s$
is an example of a moment function of order $s\in\RR$.
\end{Prz}

The moment functions $\Gamma_s$ will be extensively used in the paper,
since every moment function $m$ of order $s$ has the same growth as $\Gamma_s$. Precisely speaking,
we have 
\begin{Stw}[see {\cite[Section 5.5]{B2}}]
  \label{pr:order}
  If $m$ is a moment function of order $s\in\RR$
  then there exist constants $a,A,c,C>0$ such that
    \begin{gather*}
    a c^n\Gamma_s(n)\leq m(n) \leq A C^n\Gamma_s(n) \quad \textrm{for every} \quad n\in\NN_0.
    \end{gather*}
  \end{Stw}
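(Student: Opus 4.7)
The plan is to reduce the statement to the base case treated by Balser by walking through the four situations in which $m$ can arise: as a moment function of order $s\in(0,2)$ (Definition \ref{df:moment}), of order $s\ge 2$ (Definition \ref{df:small}), of negative order, and of order zero (the latter two via Definition \ref{df:moment_general}). Two elementary identities will glue the cases together: the ramification identity $\Gamma_{s/p}(pn)=\Gamma(1+sn)=\Gamma_s(n)$ for $p\in\NN$ and $s>0$, and the inversion identity $\Gamma_{-s}(n)=1/\Gamma_s(n)$ for $s<0$, both immediate from the piecewise definition of $\Gamma_s$.

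First, for $s\in(0,2)$ we have $k:=1/s>1/2$, so $m$ fits into Definition \ref{df:moment}, and the inequalities $ac^n\Gamma_s(n)\le m(n)\le AC^n\Gamma_s(n)$ are precisely the content of \cite[Section 5.5]{B2}. For $s\ge 2$, I will pick $p\in\NN$ large enough that $pk>1/2$, equivalently $s/p<2$; a short computation using (\ref{eq:e_m}) together with the substitution $x=y^p$ yields $m(u)=\widetilde{m}(pu)$, where $\widetilde{m}$ is a moment function of order $s/p\in(0,2)$. Applying the base case to $\widetilde{m}$, setting $u=n$, and rewriting $\Gamma_{s/p}(pn)=\Gamma_s(n)$ via the ramification identity gives the required inequality for $m$, after replacing the base constants $c',C'$ by $(c')^p,(C')^p$.

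For $s<0$, Definition \ref{df:moment_general} says that $1/m$ is a moment function of order $-s>0$, so the preceding paragraph already yields $ac^n\Gamma_{-s}(n)\le 1/m(n)\le AC^n\Gamma_{-s}(n)$; inverting and using $\Gamma_s(n)=1/\Gamma_{-s}(n)$ produces the bound for $m(n)$ with the pairs $(a,A)$ and $(c,C)$ swapped. For $s=0$ I write $m=m_1/m_2$ with $m_1,m_2$ of a common positive order $t$, apply the positive-order bound to each factor, and divide; the $\Gamma_t(n)$ factors cancel and $m(n)$ ends up sandwiched between two geometric sequences, which matches $\Gamma_0(n)=\Gamma(1)=1$ up to exponential factors.

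The only genuine difficulty is bookkeeping: tracking how the constants $a,A,c,C$ transform under ramification, inversion, and division, and verifying the piecewise identities for $\Gamma_s$ that make the transitions between cases dimensionally consistent. No new analytic estimate beyond the one encoded in Balser's classical result is required.
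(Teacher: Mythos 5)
Your argument is correct: the base case $s\in(0,2)$ is exactly Balser's estimate from Section 5.5, the case $s\ge 2$ follows from the identity $m(u)=\widetilde{m}(pu)$ (already recorded in the remark after Definition \ref{df:small}) together with $\Gamma_{s/p}(pn)=\Gamma_s(n)$, and the negative and zero orders reduce to the positive case by inversion and quotient, exactly as Definition \ref{df:moment_general} is designed to allow. The paper itself gives no proof of this proposition --- it only cites \cite[Section 5.5]{B2} --- so your write-up is simply the natural fleshing-out of that citation through the paper's own definitions, and there is no divergence of method to report.
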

  
Using Balser's theory of general moment summability (\cite[Section 6.5]{B2}, in particular \cite[Theorem 38]{B2}), 
we apply the moment functions to define moment Borel transforms, the Gevrey order and the Borel summability. We first introduce

\begin{Df}
 Let $m$ be a moment function. Then the linear operator $\Bo_{m}\colon \EE[[t]]\to\EE[[t]]$ defined by
 \[
  \Bo_{m}\big(\sum_{j=0}^{\infty}u_jt^{j}\big):=
  \sum_{j=0}^{\infty}\frac{u_j}{m(j)}t^{j}
 \]
 is called an \emph{$m$-moment Borel transform}.
\end{Df}

We define the Gevrey order of formal power series as follows
   \begin{Df}
    \label{df:summab}
    Let $s\in\RR$. Then
    $\widehat{u}\in\EE[[t]]$ is called a \emph{formal power series of Gevrey order $s$} if
    there exists a disc $D\subset\CC$ with centre at the origin such that
    $\Bo_{\Gamma_s}\widehat{u}\in\Oo(D,\EE)$. The space of formal power series of Gevrey 
    order $s$ is denoted by $\EE[[t]]_s$.
   \end{Df}

\begin{Uw}
 By Proposition \ref{pr:order}, we may replace $\Gamma_s$ in Definition \ref{df:summab} by any moment function $m$ of the same order $s$.
\end{Uw}

\begin{Uw}
 If $\widehat{u}\in\EE[[t]]_s$ and $s\leq 0$ then the formal series $\widehat{u}$ is convergent,
 so its sum $u$ is well defined.
 Moreover, $\widehat{u}\in\EE[[t]]_0 \Longleftrightarrow u\in\Oo(D,\EE)$ and
 $\widehat{u}\in\EE[[t]]_s \Longleftrightarrow u\in\Oo^{-1/s}(\CC,\EE)$ for $s<0$.
\end{Uw}

\begin{Df}
  Let $e_m, E_m$ be a pair of kernel functions of order $1/k>0$ with a moment function $m$ and let $d\in\RR$.
  \begin{itemize}
  \item If $v\in\Oo^{k}(\widehat{S}_d,\EE)$ then the integral operator $T_{m,d}$ defined by
  \[
   (T_{m,d}v)(t):=\int_{e^{id}\RR_+}e_{m}(s/t)v(s)\frac{ds}{s}
  \]
 is called an \emph{$m$-moment Laplace transform in a direction $d$}.
 \item If $v\in\Oo(S_d(\frac{\pi}{k}+\varepsilon,R),\EE)$ for some $\varepsilon,R>0$ then the integral operator $T^-_{m,d}$ defined by
  \[
   (T^-_{m,d}v)(s):=-\frac{1}{2\pi i}\int_{\gamma(d)}E_{m}(s/t)v(t)\frac{dt}{t}
  \]
(where a path $\gamma(d)$ is the boundary of a sector contained in $S_d(\frac{\pi}{k}+\varepsilon,R)$ with bisecting direction $d$,
a finite radius, an opening slightly larger than $\pi/k$, and the orientation is negative) is called an \emph{inverse $m$-moment Laplace transform in a direction $d$}.
 \end{itemize}
\end{Df}

\begin{Uw}
 Observe, that $T_{m,d}(t^{n})=m(n)t^{n}$ for every $n\in\NN_0$. Hence $ T_{m,d}\Bo_{m}u=u$
 for every $u\in\Oo(D)$.
\end{Uw}

Now we are ready to define the summability of formal power series
\begin{Df}
Let $k>0$ and $d\in\RR$. Then $\widehat{u}\in\EE[[t]]$ is called
\emph{$k$-summable in a direction $d$} if there exist $\varepsilon>0$ and a disc-sector $\widehat{S}_d=\widehat{S}_d(\varepsilon)$
in a direction $d$ such that
$v=\Bo_{\Gamma_{1/k}}\widehat{u}\in\Oo^k(\widehat{S}_d,\EE)$.
\par
 Moreover, the \emph{$k$-sum of $\widehat{u}$ in the direction $d$} is given by
\begin{equation}
\label{eq:sum}
       u^d(t)=\mathcal{S}_{k,d}\widehat{u}(t):=(T_{m,\theta}v)(t)=\int_{e^{i\theta}\RR_+}e_{m}(s/t)v(s)\frac{ds}{s}\quad\textrm{for}\quad
       \theta\in(d-\varepsilon/2,d+\varepsilon/2).
\end{equation}
\end{Df}

\begin{Df}
If $\widehat{u}\in\EE[[t]]$ is $k$-summable in all directions $d$ but (after identification modulo $2\pi$)
finitely many directions $d_1,\dots,d_n$ then
$\widehat{u}$ is called \emph{$k$-summable} and $d_1,\dots,d_n$ are called \emph{singular directions of $\widehat{u}$}.
\end{Df}

Next we extend the notion of $k$-summable formal power series to that which are multisummable.

\begin{Df}
\label{df:multisummable}
 Let $k_1>\dots>k_n>0$ and let $\kappa_1,\dots,\kappa_n$ be defined by $\kappa_1=k_1$, $1/\kappa_j=1/k_j-1/k_{j-1}$, $2\leq j \leq n$.
 We say that a real vector $\mathbf{d}=(d_1,\dots,d_n)$ is an \emph{admissible multidirection with respect to} $\mathbf{k}=(k_1,\dots,k_n)$ if
 \begin{eqnarray*}
  2\kappa_j|d_j-d_{j-1}| \leq \pi & \textrm{for} & j=2,\dots,n.
 \end{eqnarray*}
\end{Df}

\begin{Uw}
 Admissibility of $\mathbf{d}$ with respect to $\mathbf{k}$ is equivalent to the inclusions $I_1\subseteq I_2\subseteq \dots\subseteq I_n$,
 where $I_j:=(d_j-\frac{\pi}{2k_j},d_j+\frac{\pi}{2k_j})$ for $j=1,\dots,n$.
\end{Uw}

\begin{Df}
  Let $m_1,\dots,m_n$ be moment functions of positive orders respectively $1/\kappa_1,\dots,1/\kappa_n$, where $\kappa_1,\dots,\kappa_n$ are
  constructed in Definition \ref{df:multisummable}.
  A formal power series $\widehat{u}(t)=\sum_{j=0}^{\infty}u_jt^{j}\in\EE[[t]]$ is called
  \emph{$\mathbf{k}$-multisummable in the admissible multidirection
  $\mathbf{d}$}, provided that
  \begin{itemize}
  \item $v_n(t):=(\Bo_{m_n}\cdots\Bo_{m_1}\widehat{u})(t)
  =\sum\limits_{j=0}^{\infty}\frac{u_j}{m_1(j)\cdots m_n(j)}t^{j}\in \Oo^{\kappa_n}(\widehat{S}_{d_n})$,
  \item $v_{j-1}(t):=(T_{m_j,d_j}v_j)(t)\in\Oo^{\kappa_{j-1}}(S_{d_{j-1}})$ for $j=n,n-1,\dots,2$.
  \end{itemize}
  \par
  Moreover, the \emph{$\mathbf{k}$-multisum of $\widehat{u}$ in the multidirection $\mathbf{d}$} is given by 
 $$
u^{\mathbf{d}}(t)=\mathcal{S}_{\mathbf{k},\mathbf{d}}\widehat{u}(t):=(T_{m_1,d_1}\cdots T_{{m_n},d_n}v_n)(t).
 $$
\end{Df}
 \par
 \begin{Df}
 If $(d_1,\dots,d_n)$ is an admissible multidirection and the functions $v_n,\dots,v_j$ all exist, but
 $v_j\not\in\Oo^{\kappa_{j}}(S_{d_{j}})$ then $d_j$ is called a \emph{singular direction of $\widehat{u}$ of level $k_j$}
 (for $j=1,\dots,n$).
\end{Df}
\begin{Df}
 If $\widehat{u}$ has at most (after identification modulo $2\pi$) finitely many singular directions of each level $k_j$, $1\leq j \leq n$,
 then $\widehat{u}$ is called \emph{$\mathbf{k}$-multisummable}.
\end{Df}

\begin{Uw}
 \label{re:suma}
 If $k_1>\dots>k_n>0$, $(d_1,\dots,d_n)$ is an admissible multidirection and  $\widehat{u}_j$ is $k_j$-summable
 in a direction $d_j$ for $j=1,\dots,n$, then, by \cite[Lemma 20]{B2},
 $\widehat{u}:=\widehat{u}_1+\cdots+\widehat{u}_n$ is $\mathbf{k}$-multisummable in the multidirection $\mathbf{d}$
 and $\Ss_{\mathbf{k},\mathbf{d}}\widehat{u}(t)=\Ss_{k_1,d_1}\widehat{u}_1(t)+\cdots+\Ss_{k_n,d_n}\widehat{u}_n(t)$.

 Moreover, if additionally $\widehat{u}_j$ is $k_j$-summable with $n_j$ singular directions
 $d_{j,1},\dots,d_{j,n_j}$ (for $j=1,\dots,n$)
 then $\widehat{u}$ is $\mathbf{k}$-multisummable and $d_{j,1},\dots,d_{j,n_j}$ are singular directions of $\widehat{u}$ of level $k_j$.
\end{Uw}

\section{Moment operators}
     In this section we recall the notion of moment differential operators constructed by Balser and Yoshino
     \cite{B-Y} and the concept of moment pseudodifferential operators introduced in the previous
     papers of the first author \cite{Mic7,Mic8}.
     
   \begin{Df}
    Let $m$ be a moment function. Then the linear operator
    $\partial_{m_,x}\colon\EE[[x]]\to\EE[[x]]$
    defined by
    \[
     \partial_{m,x}\Big(\sum_{j=0}^{\infty}\frac{u_{j}}{m(j)}x^{j}\Big):=
     \sum_{j=0}^{\infty}\frac{u_{j+1}}{m(j)}x^{j}
    \]
    is called the \emph{$m$-moment differential operator $\partial_{m,x}$}.
    \end{Df}
    
     Below we present most important examples of moment differential operators.
     Other examples, including also integro-differential operators, can be found in
     \cite[Example 3]{Mic8}.
     \begin{Prz} 
     If $m(u)=\Gamma_1(u)$ then the operator $\partial_{m,x}$ coincides with the usual differentiation $\partial_x$.
     More generally, if $s>0$ and $m(u)=\Gamma_s(u)$ then the operator $\partial_{m,x}$ satisfies
     $(\partial_{m,x}\widehat{u})(x^s)=\partial^s_x(\widehat{u}(x^s))$,
     where $\partial^s_x$ denotes the Caputo fractional derivative of order $s$
     defined by
       $$
     \partial^{s}_{x}\Big(\sum_{j=0}^{\infty}\frac{u_{j}}{\Gamma_s(j)}x^{sj}\Big):=
     \sum_{j=0}^{\infty}\frac{u_{j+1}}{\Gamma_s(j)}x^{sj}.$$
     \end{Prz}
     \par
Immediately by the definition, we obtain the following connection between the moment Borel transform and the moment
differentiation.
\begin{Stw}
\label{pr:commutation}
Let $m$ and $m'$ be two moment functions. Then
the operators $\Bo_{m'}, \partial_{m,t}\colon\EE[[t]]\to\EE[[t]]$ satisfy the following commutation formulas
for every $\widehat{u}\in\EE[[t]]$ and for $\overline{m}=mm'$:
\begin{enumerate}
\item[i)] $\Bo_{m'}\partial_{m,t}\widehat{u}=\partial_{\overline{m},t}\Bo_{m'}\widehat{u}$,
\item[ii)] $\Bo_{m'}P(\partial_{m,t})\widehat{u}=P(\partial_{\overline{m},t})\Bo_{m'}\widehat{u}$
for any polynomial $P$ with constant coefficients.
\end{enumerate}
\end{Stw}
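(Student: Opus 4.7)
The plan is to verify both identities by direct computation on formal power series, handling (i) via explicit coefficient matching and reducing (ii) to (i) by induction and linearity.

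For part (i), I would fix $\widehat{u}(t)=\sum_{j=0}^{\infty} u_j t^j \in \EE[[t]]$ and expand both sides as formal series. Rewriting $u_j = m(j)\cdot(u_j/m(j))$ so that the definition of $\partial_{m,t}$ can be applied directly, one obtains
\[
\partial_{m,t}\widehat{u}(t)=\sum_{j=0}^{\infty}\frac{m(j+1)\,u_{j+1}}{m(j)}\,t^{j},
\]
and then $\Bo_{m'}$ divides the $j$-th coefficient by $m'(j)$, producing the sequence $m(j+1)\,u_{j+1}/(m(j)m'(j))$. Conversely, $\Bo_{m'}\widehat{u}(t)=\sum_{j=0}^{\infty}(u_j/m'(j))\,t^{j}$, and since $\overline{m}(j)=m(j)m'(j)$, writing each coefficient as $\overline{m}(j)\cdot u_j/(m'(j)\overline{m}(j))$ and applying $\partial_{\overline{m},t}$ yields the coefficient sequence $\overline{m}(j+1)\,u_{j+1}/(m'(j+1)\overline{m}(j))=m(j+1)\,u_{j+1}/(m(j)m'(j))$. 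The two formal series therefore coincide term by term.

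For part (ii), by linearity of $\Bo_{m'}$ and of the evaluation of a polynomial at an operator, it suffices to verify $\Bo_{m'}\partial_{m,t}^k\widehat{u}=\partial_{\overline{m},t}^k\Bo_{m'}\widehat{u}$ for every $k\in\NN$. This follows by an immediate induction on $k$: the case $k=1$ is exactly part (i), and the inductive step
\[
\Bo_{m'}\partial_{m,t}^{k+1}\widehat{u}=\Bo_{m'}\partial_{m,t}\bigl(\partial_{m,t}^k\widehat{u}\bigr)=\partial_{\overline{m},t}\Bo_{m'}\bigl(\partial_{m,t}^k\widehat{u}\bigr)=\partial_{\overline{m},t}^{k+1}\Bo_{m'}\widehat{u}
\]
uses (i) applied to the formal series $\partial_{m,t}^k\widehat{u}$ followed by the inductive hypothesis.

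There is no substantial obstacle to either identity; the argument is purely algebraic bookkeeping with the quotients $m(j+1)/m(j)$ and their products, and the two sides of (i) are forced to agree by the telescoping cancellation $\overline{m}(j+1)/m'(j+1)=m(j+1)\overline{m}(j)/(m(j)m'(j))\cdot(m'(j)/\overline{m}(j))$. The only point worth mentioning is that, by Proposition \ref{pr:order}, the values $m(j)$ and $m'(j)$ are strictly positive for all $j\in\NN$, so every division performed above is legitimate.
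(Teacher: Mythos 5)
Your computation is correct and is exactly the definitional coefficient check that the paper treats as immediate (it offers no written proof, stating the result follows ``immediately by the definition''), so there is nothing to compare beyond noting that your induction for (ii) is the standard reduction to (i). One cosmetic slip: the final ``telescoping'' identity in your closing remark simplifies to $m(j+1)=m(j+1)/m(j)$ as written, but this is a redundant restatement and the coefficient matching you carried out earlier already establishes (i) correctly.
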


Now, following \cite{Mic8} we generalise moment differential operators to a kind of pseudodifferential operators. Namely, we have
\begin{Df}[{\cite[Definition 13]{Mic8}}]
Let $m$ be a moment function of order $1/k>0$ and
 $\lambda(\zeta)$ be an analytic function of the variable
$\xi=\zeta^{1/\gamma}$ for $|\zeta|\geq r_0$
(for some $\gamma\in\NN$ and $r_0>0$) of polynomial growth at infinity.
A \emph{moment pseudodifferential operator}
 $\lambda(\partial_{m,z})\colon\Oo_{1/\gamma}(D)\to\Oo_{1/\gamma}(D)$ is defined by
 \begin{equation*}
  \lambda(\partial_{m,z})\varphi(z):=\frac{1}{2\gamma\pi i} \oint^{\gamma}_{|w|=\varepsilon}\varphi(w)
  \int_{r_0e^{i\theta}}^{\infty(\theta)}\lambda(\zeta)
  E_{\widetilde{m}}(\zeta^{1/\gamma} z^{1/\gamma})\frac{e_m(\zeta w)}{\zeta w}\,d\zeta\,dw
 \end{equation*}
for every $\varphi\in\Oo_{1/\gamma}(D_r)$ and $|z|<\varepsilon < r$, where $\widetilde{m}(u):=m(u/\gamma)$,
$E_{\widetilde{m}}(\zeta^{1/\gamma} z^{1/\gamma})=\sum_{n=0}^{\infty}\frac{\zeta^{n/\gamma}z^{n/\gamma}}{m(n/\gamma)}$,
$\theta\in (-\arg w-\frac{\pi}{2k}, -\arg w + \frac{\pi}{2k})$ and $\oint_{|w|=\varepsilon}^{\gamma}$ means that we integrate 
$\gamma$ times along the positively oriented circle of radius $\varepsilon$. Here the integration in the inner integral is taken over
a ray $\{re^{i\theta}\colon r\geq r_0\}$.
\end{Df}

\begin{Df}[{\cite[Definition 9]{Mic7}}]
Let $\lambda(\zeta)$ be an analytic function of the variable
$\xi=\zeta^{1/\gamma}$ for $|\zeta|\geq r_0$
(for some $\gamma\in\NN$ and $r_0>0$) of polynomial growth at infinity. Then we define \emph{the pole order $q\in\QQ$}
and \emph{the leading term $\lambda_0\in\CC\setminus\{0\}$}
   of $\lambda(\zeta)$ as the numbers satisfying the formula
   $\lim_{\zeta\to\infty}\lambda(\zeta)/\zeta^q=\lambda_0$.
   We write it also $\lambda(\zeta)\sim\lambda_0\zeta^q$.
\end{Df}

\section{Stokes phenomenon and hyperfunctions}
Now we extend the concept of the Stokes phenomenon (see \cite[Definition 7]{Mic-Pod}) to multisummable formal power series
$\widehat{u}\in\EE[[t]]$.
\begin{Df}
\label{df:stokes}
Assume that $\widehat{u}\in\EE[[t]]$ is $\mathbf{k}$-multisummable with singular directions
$d_{j,1},\dots,d_{j,n_j}$ of level $k_j$, $1\leq j \leq n$. Then for every $l=1,\dots,n_j$ and $j=1,\dots,n$ the set
$\mathcal{L}_{d_{j,l}}=\{t\in\tilde{\CC}\colon \arg t=d_{j,l}\}$ 
is called a \emph{Stokes line of level $k_j$} for $\widehat{u}$.

Assume now that for fixed $j\in\{1,\dots,n\}$ the vector $\mathbf{d}=(d_1,\dots,d_n)$ is an admissible multidirection
with a singular direction $d_j$ of level $k_j$ and with nonsingular directions $d_l$ of level $k_l$ for $l\neq j$, 
and let $\mathbf{d_j^{\pm}}:=(d_1,\dots,d_j^{\pm},\dots,d_n)$ be the admissible multidirections, where $d_j^+$ (resp.
$d_j^-$) denotes a direction close to $d_j$ and greater (resp. less) than $d_j$,
and let $u^{\mathbf{d_j^+}}:=\Ss_{\mathbf{k},\mathbf{d_j^+}}\widehat{u}$ (resp. $u^{\mathbf{d_j^-}}:=\Ss_{\mathbf{k},\mathbf{d_j^-}}\widehat{u}$)
then the difference $J_{\mathcal{L}_{d_j},k_j}\widehat{u}:= u^{\mathbf{d_j^+}}-u^{\mathbf{d_j^-}}$ is called a \emph{jump for $\widehat{u}$ across
the Stokes line $\mathcal{L}_{d_j}$ of level $k_j$}.
\end{Df}

\begin{Uw}
 Every Stokes line $\mathcal{L}_{d_j}$ of level $k_j$ for $\widehat{u}$ determines also so called \emph{anti-Stokes lines
 $\mathcal{L}_{d_j\pm\frac{\pi}{2k_j}}$ of level $k_j$} for $\widehat{u}$.
\end{Uw} 

We will describe jumps across the Stokes lines in terms of hyperfunctions. The similar approach to the Stokes phenomenon one can find
in \cite{Im, Mal3, S-S}. For more information about the theory of hyperfunctions we refer the reader to \cite{Kaneko}.

We will consider the space 
$$\mathcal{H}^k(\mathcal{L}_d):=\OO^k(D\cup (S_d\setminus \mathcal{L}_d))\Big/\OO^k(\widehat{S}_d)$$
of Laplace type hyperfunctions supported by $\mathcal{L}_d$ with exponential growth of order $k$. It means that every hyperfunction
$G\in\mathcal{H}^k(\mathcal{L}_d)$ may be written as
\[
G(s)=[g(s)]_{d}=\{g(s)+h(s)\colon h(s)\in\OO^{k}(\widehat{S}_d)\}
\]
for some defining function $g(s)\in\OO^k(D\cup (S_d\setminus \mathcal{L}_d))$. 

Let $\gamma_{d}$ be a path consisting of the half-lines from 
$e^{id^-}\infty$ to $0$ and from $0$ to $e^{id^+}\infty$, i.e.
$\gamma_{d}=-\gamma_{d^-}+\gamma_{d^+}$ with $\gamma_{d^{\pm}}=\mathcal{L}_{d^{\pm}}$.
By the K\"othe type theorem \cite{Kot} one can treat the hyperfunction $G(s)=[g(s)]_d$ as the analytic functional defined by
\begin{gather}
\label{eq:kothe}
G(s)[\varphi(s)]:=\int_{\gamma_d}g(s)\varphi(s)\,ds,
\end{gather}
for such small $\varphi\in\OO^{-k}(\widehat{S}_d)$
that the function $s\mapsto g(s)\varphi(s)$ belongs to the space
$\OO^{-k}(D\cup (S_d\setminus \mathcal{L}_d))$.

To describe the jumps across the Stokes lines in terms of hyperfunctions,
first assume that $\widehat{f}\in\CC[[t]]$ is $k$-summable, $m$ is a moment function of order $1/k$ and $d$ is a singular direction. 
By (\ref{eq:sum}) the jump for $\widehat{f}$ across the Stokes line $\mathcal{L}_d$ is given by 
\begin{equation}
\label{eq:jump_hyp}
 J_{\mathcal{L}_d}\widehat{f}(t)=f^{d^+}(t)-f^{d^-}(t)=(T_{m,d^+}-T_{m,d^-})\Bo_{m}\widehat{f}(t).
\end{equation}

Observe that we can treat
$g_0(t):=\Bo_{m}\widehat{f}(t)\in\Oo^{k}(D\cup (S_{d}\setminus \mathcal{L}_{d}))$
as a defining function of the hyperfunction $G_0(s):=[g_0(s)]_{d}\in \mathcal{H}^{k}(\mathcal{L}_{d})$.
So, combining (\ref{eq:kothe}) with (\ref{eq:jump_hyp}) we conclude that 
\begin{gather}
\label{eq:J}
 J_{\mathcal{L}_d}\widehat{f}(t)=G_0(s)\Big[\frac{e_{m}(s/t)}{s}\Big]\quad \textrm{for sufficiently small}\ r>0\ \textrm{and}\
 t\in S_{d}(\frac{\pi}{k}, r).
\end{gather}

Moreover, it is natural to define the $m$-moment Laplace operator
$T_{m,d}$ acting on the hyperfunction $G(s)$ as
$T_{m,d}G(t):=G(s)\Big[\frac{e_{m}(s/t)}{s}\Big]$ for
$t\in S_{d}(\frac{\pi}{k}, r)$, where $G(s)[\varphi(s)]$ is defined by (\ref{eq:kothe}). So, by (\ref{eq:J})
we may describe the jump in terms of the $m$-moment Laplace operator acting on the hyperfunction as
$J_{\mathcal{L}_d}\widehat{f}(t)=T_{m,d}G_0(t)$.

Now, let $\widehat{f}\in\CC[[t]]$ be $\mathbf{k}$-multisummable and $\mathbf{d}$ be as in Definition \ref{df:stokes} with $\mathcal{L}_{d_j}$
being the Stokes line of level $k_j$. We additionally assume as in Remark \ref{re:suma} that $\widehat{f}=\widehat{f}_1+\cdots+\widehat{f}_n$, where
$\widehat{f}_j$ is $k_j$-summable. 
Then, by Remark \ref{re:suma}, analogously as in the summable case, the jump across $\mathcal{L}_{d_j}$ of level $k_j$ is given by
$$
J_{\mathcal{L}_{d_j},k_j}\widehat{f}=
f^{\mathbf{d_j^+}}-f^{\mathbf{d_j^-}}=f_j^{d_j^+}-f_j^{d_j^-}
=(T_{m_j,d_j^+}-T_{m_j,d_j^-})\Bo_{m_j}\widehat{f}_j
$$
and we may describe this jump in terms of hyperfunctions as in the previous case.

Similarly, if $\mathcal{L}_d$ is a Stokes line for $k$-summable $\widehat{u}=\widehat{u}(t,z)\in\Oo(D)[[t]]$, then
we are able to describe jumps for $\widehat{u}(t,z)$ at the point $z=0$ in terms of hyperfunctions. Namely we have 
\begin{gather*}
J_{\mathcal{L}_{d}}\widehat{u}(t,0)=(T_{m,d}F_{0})(t)= F_{0}(s)\Big[\frac{e_{m}(s/t)}{s}\Big],\quad \textrm{where}\quad
F_{0}(s)=[\Bo_{m}\widehat{u}(s,0)]_{d}\in \mathcal{H}^k(\mathcal{L}_{d}).
\end{gather*}

Analogously we calculate jumps across a Stokes line $\mathcal{L}_{d_j}$ of level $k_j$ for $\mathbf{k}$-multisummable $\widehat{u}$
satisfying $\widehat{u}=\widehat{u}_1+\cdots+\widehat{u}_n$, where $\widehat{u}_i$ is $k_i$-summable ($i=1,\dots,n$).

\begin{Uw}
\label{re:z}
 In some special cases we are also able to describe jumps for $\widehat{u}(t,z)$ at any point $z\in D$.
 It is possible in the case when $\widehat{u}$ is a multisummable solution of
 $$
 \begin{cases}
  P(\partial_{m_1,t},\partial_z)u=0\\
  \partial_{m_1,t}^ju(0,z)=\varphi_j(z)\quad\textrm{for}\quad j=0,\dots,N-1,
 \end{cases}
 $$
 instead of (\ref{eq:1}). In this case
 we are able to reduce the problem of description of jumps for $\widehat{u}(t,z)$ at the fixed point $z\in D$, to the problem
 of description of jumps for the auxiliary formal power series $\widehat{u}_z(t,s):=\widehat{u}(t,s+z)$ at the point $s=0$. Since the derivative
 operator
 $\partial_z$ is invariant under the translation, i.e. $(\partial_z\widehat{u})(t,s+z)=\partial_s(\widehat{u}(t,s+z))$, we conclude that
 $\widehat{u}_z(t,s)$ is a multisummable solution of
 $$
 \begin{cases}
  P(\partial_{m_1,t},\partial_s)u_z=0\\
  \partial_{m_1,t}^ju_z(0,s)=\varphi_{z,j}(s):=\varphi_j(s+z)\quad\textrm{for}\quad j=0,\dots,N-1.
 \end{cases}
 $$
 Hence $J_{\mathcal{L}_{d}}\widehat{u}(t,z)=J_{\mathcal{L}_{d_z}}\widehat{u}_z(t,0)$, where $\mathcal{L}_{d_z}$ is a Stokes line of
 $\widehat{u}_z$, which corresponds to a Stokes line $\mathcal{L}_d$ of $\widehat{u}$.
 
 Since in general the moment differential operators are not invariant under translation, we are not able to use this method to
 describe the jumps for solutions of $P(\partial_{m_1,t},\partial_{m_2,z})u=0$ at any point $z\in D$.
 \end{Uw}

\section{A maximal family of solutions}
Now we are ready to describe a family of normalised actual solutions of given non-Kowalevskian equation using sums of multisummable formal
power series solution. More precisely we consider the Cauchy problem
\begin{equation}
 \label{eq:pde}
 \begin{cases}
  P(\partial_{m_1,t},\partial_{m_2,z})u=0&\\
  \partial_{m_1,t}^j u(0,z)=\varphi_j(z)\in\OO(D),&j=0,\dots,N-1,
 \end{cases}
\end{equation}
where $m_1$, $m_2$ are moment functions of orders $s_1,s_2>0$ respectively and
\begin{equation}
 \label{eq:polyn}
P(\lambda,\zeta)=P_0(\zeta)\lambda^N-\sum_{j=1}^N P_j(\zeta)\lambda^{N-j}
\end{equation}
is a general polynomial of two variables, which is of
order $N$ with respect to $\lambda$.

If  $P_0(\zeta)$ defined by (\ref{eq:polyn}) is not a constant, then a formal solution of (\ref{eq:pde})
    is not uniquely determined. To avoid this inconvenience we choose some special solution which is already 
    uniquely determined. To this end we factorise the polynomial $P(\lambda,\zeta)$ as follows
   \begin{equation*}
    P(\lambda,\zeta)=P_0(\zeta)(\lambda-\lambda_1(\zeta))^{N_1}\cdots(\lambda-\lambda_l(\zeta))^{N_l}
    =:P_0(\zeta)\widetilde{P}(\lambda,\zeta),
    \end{equation*}
   where $\lambda_1(\zeta),\dots,\lambda_l(\zeta)$ are the roots of the characteristic equation
   $P(\lambda,\zeta)=0$ with multiplicity
   $N_1,\dots,N_l$ ($N_1+\cdots+N_l=N$) respectively.
   \par
     Since $\lambda_{\alpha}(\zeta)$ are algebraic functions,
     we may assume that there exist $\gamma\in\NN$ and $r_0<\infty$ such that
   $\lambda_{\alpha}(\zeta)$ are holomorphic functions of the variable $\xi=\zeta^{1/\gamma}$
   (for $|\zeta|\geq r_0$ and $\alpha=1,\dots,l$) and, moreover, there exist $\lambda_{\alpha}\in\CC\setminus\{0\}$ and 
   $q_{\alpha}=\mu_{\alpha}/\nu_{\alpha}$
   (for some relatively prime numbers $\mu_{\alpha}\in\ZZ$ and $\nu_{\alpha}\in\NN$) such that
   $\lambda_{\alpha}(\zeta)\sim\lambda_{\alpha}\zeta^{q_{\alpha}}$ for $\alpha=1,\dots,l$.
   Observe that $\nu_{\alpha}|\gamma$ for $\alpha=1,\dots,l$.

Hence $\lambda_{\alpha}(\partial_{m_2,z})$ are well-defined moment pseudodifferential operators
   and consequently also the operator
   $$
    \widetilde{P}(\partial_{m_1,t},\partial_{m_2,z})=(\partial_{m_1,t}-\lambda_1(\partial_{m_2,z}))^{N_1}\cdots
    (\partial_{m_1,t}-\lambda_l(\partial_{m_2,z}))^{N_l}
   $$
   is well-defined.
\par
Under the above assumption, by a \emph{normalised formal solution} $\widehat{u}$ of (\ref{eq:pde}) we mean such solution
   of (\ref{eq:pde}), which is also a solution of the pseudodifferential equation
   $\widetilde{P}(\partial_{m_1,t},\partial_{m_2,z})\widehat{u}=0$ (see \cite[Definition 10]{Mic7}).
\par
Since the principal part of the pseudodifferential operator $\widetilde{P}(\partial_{m_1,t},\partial_{m_2,z})$ with respect to $\partial_{m_1,t}$
is given by $\partial_{m_1,t}^N$,
the Cauchy problem (\ref{eq:pde}) has a unique normalised formal power series solution $\widehat{u}\in\OO(D)[[t]]$.
If we additionally assume that $\widehat{u}$ is multisummable, then using the procedure of multisummability in nonsingular directions,
we obtain a family of normalised actual solutions of (\ref{eq:pde}) on some sectors with respect to $t$.
This motivates us to introduce the following definitions.

\begin{Df}
Let $S$ be a sector in the universal covering space $\tilde{\CC}$. A function $u\in\OO(S\times D)$ is called a \emph{normalised actual solution}
of (\ref{eq:pde})
if it satisfies 
\begin{equation*}
 \begin{cases}
  \widetilde{P}(\partial_{m_1,t},\partial_{m_2,z})u=0&\\
  \lim\limits_{t\to 0,\ t\in S}\partial_{m_1,t}^j u(t,z)=\varphi_j(z)\in\OO(D),&j=0,\dots,N -1.
 \end{cases}
\end{equation*}
\end{Df}

In \cite{Mic-Pod} we introduced a maximal family of solutions of (\ref{eq:pde}) in the case when a formal power series solution is $k$-summable.
It is a collection of all actual solutions of (\ref{eq:pde}) constructed by the procedure of $k$-summability.
Now we generalise this definition to the multisummable case. 

\begin{Df}
 \label{df:maximal}
 Assume that the normalised formal power series solution $\widehat{u}$ of (\ref{eq:pde}) is $\mathbf{k}$-multisummable,
 $\mathcal{J}$ is a finite set of indices,
 and $V$ is a sector with an opening greater than $\pi/k_n$ on  the Riemann surface of $t^{\frac{1}{q}}$ for some $q\in\QQ_+$.
 \par
 We say that $\{u_i\}_{i\in \mathcal{J}}$ with $u_i\in\OO(V_i\times D)$ is a
 \emph{maximal family of solutions of (\ref{eq:pde}) on $V\times D$} if the following conditions hold:
 \begin{enumerate}
  \item[(a)] $V_i\subseteq V$ is a sector of opening greater than $\pi/k_1$ for every $i\in \mathcal{J}$.
  \item[(b)] $\{V_i\}_{i\in \mathcal{J}}$ is a covering of $V$. 
  \item[(c)] $u_i\in\OO(V_i\times D)$ is a normalised actual solution of (\ref{eq:pde}) for every $i\in \mathcal{J}$.
  \item[(d)] If $V_i\cap V_j \neq \emptyset$ then $u_i\not\equiv u_j$ on $(V_i\cap V_j)\times D$ for every $i,j\in \mathcal{J}$, $i\neq j$.
  \item[(e)] For every $i\in \mathcal{J}$  there exists an admissible nonsingular multidirection $\mathbf{d}$ such that
  $u_i= \Ss_{\mathbf{k},\mathbf{d}}\widehat{u}$ on $\tilde{V}\times D$ for some non empty sector $\tilde{V}\subseteq V_i$.
  \item[(f)] For every admissible nonsingular multidirection $\mathbf{d}$ there exists $i\in \mathcal{J}$ such that
  $\Ss_{\mathbf{k},\mathbf{d}}\widehat{u}= u_i$ on $\tilde{V}\times D$ for some sector $\tilde{V}\subseteq V_i$.
 \end{enumerate}
\end{Df}
 
Now we are ready to describe a maximal family of solutions of (\ref{eq:pde}) generalising our previous result \cite[Theorem 3]{Mic-Pod} to
the multisummable case.
\begin{Tw}
 \label{th:general}
 Let $\widehat{u}$ be a $\mathbf{k}$-multisummable normalised formal power series solution of (\ref{eq:pde})
 with a $\mathbf{k}$-multisum
 in a nonsingular admissible multidirection $\mathbf{d}$ given by
 $u^{\mathbf{d}}=\Ss_{\mathbf{k},\mathbf{d}}\widehat{u}$ and satisfying 
 $\widehat{u}=\widehat{u}_1+\cdots+\widehat{u}_n$, where $\widehat{u}_j$ is $k_j$-summable for $j=1,\dots,n$.
 Assume that there exists $q\in\QQ_+$, which is the smallest positive rational number such that
 $u^{\mathbf{d}}(t,z)=u^{\mathbf{d}}(te^{2q\pi i},z)$
 for every nonsingular multidirection $\mathbf{d}$.
 Suppose that the set of singular directions of $\widehat{u}$ of level $k_j$ modulo $2q\pi$ is given by $\{d_{j,1},\dots,d_{j,n_j}\}$,
 where $0\leq d_{j,1} < \dots< d_{j,n_j}<2q\pi$ ($j=1,\dots,n$).
 
 Furthermore, let
 \begin{gather*}
  I_{j,l}:=(d_{j,l}-\frac{\pi}{2k_j},d_{j, l+1}+\frac{\pi}{2k_j})\quad \textrm{for}\quad l=1,\dots,n_j,\quad j=1,\dots,n,
 \end{gather*}
where $d_{j, n_j+1}:=d_{j,1}+2q\pi$, and let
$$
 \mathcal{J}:=\{\mathbf{l}=(l_1,\dots,l_n)\in\NN^n\colon 1\leq l_j \leq n_j,\, |I_{j,l_j}\cap \dots\cap I_{n,l_n}|>\frac{\pi}{k_j},\, j=1,\dots,n\},
$$
where $|I|$ denotes the length of the interval $I$.

 Then:
 \begin{enumerate}
 \item[(i)] for every $\mathbf{l}\in \mathcal{J}$ there exists an admissible multidirection $\mathbf{d}=(d_1,\dots,d_n)$ satisfying
 \begin{gather}
 \label{eq:multi}
  d_j\in (d_{j,l_j},d_{j,l_{j}+1}),\quad j=1,\dots,n,
 \end{gather}
 for which the function
 $u_{\mathbf{l}}:=\Ss_{\mathbf{k},\mathbf{d}}\widehat{u}$ is well defined,
 \item[(ii)] for every sufficiently small $\varepsilon>0$ there exists $r>0$ such that $u_{\mathbf{l}}\in\OO(V_{\mathbf{l}}(\varepsilon,r)\times D)$
 for every $\mathbf{l}\in \mathcal{J}$,
 where
 $$V_{\mathbf{l}}(\varepsilon,r):=\{t\in W_r\colon (\arg t-\frac{\varepsilon}{2},\arg t +\frac{\varepsilon}{2})\subseteq I_{1,l_1}\cap \dots\cap 
 I_{n,l_n}\}$$
 and $W_r=\{t\in W\colon 0<|t|<r\}$ with $W$ being the Riemann surface of $t\mapsto t^{\frac{1}{q}}$,
 \item[(iii)] $\{u_{\mathbf{l}}\}_{\mathbf{l}\in \mathcal{J}}$ is a maximal family
 of solutions of (\ref{eq:pde}) on $W_r\times D$.
 \end{enumerate}
\end{Tw}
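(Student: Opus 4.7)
The plan is to verify (i), (ii), (iii) in sequence, using throughout the characterisation (stated after Definition \ref{df:multisummable}) of admissibility of $\mathbf{d}$ as the chain of inclusions $I_1 \subseteq \cdots \subseteq I_n$, where $I_j := (d_j - \frac{\pi}{2k_j}, d_j + \frac{\pi}{2k_j})$. A key preliminary observation is that, as $d_j$ ranges over the nonsingular arc $(d_{j,l_j}, d_{j,l_j+1})$, the union of the resulting intervals $I_j$ is exactly $I_{j,l_j}$; the inequalities defining $\mathcal{J}$ then encode precisely the room needed to place $I_j, I_{j+1}, \ldots, I_n$ compatibly inside the higher-level intersections.

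For (i), I would construct $\mathbf{d}$ by a bottom-up recursion: pick $I_1$ of length $\frac{\pi}{k_1}$ strictly inside $I_{1,l_1} \cap \cdots \cap I_{n,l_n}$ (possible by the $\mathcal{J}$-condition at level $1$), and for $j = 2, \ldots, n$ extend $I_{j-1}$ to an $I_j$ of length $\frac{\pi}{k_j}$ strictly inside $I_{j,l_j} \cap \cdots \cap I_{n,l_n}$. This extension is feasible because $\frac{\pi}{k_j} > \frac{\pi}{k_{j-1}}$ and the $\mathcal{J}$-condition provides the required slack. Taking $d_j$ to be the midpoint of $I_j$ yields a nonsingular admissible multidirection satisfying (\ref{eq:multi}), so $u_{\mathbf{l}} := \Ss_{\mathbf{k},\mathbf{d}}\widehat{u}$ is well defined. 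For (ii), write $\widehat{u} = \widehat{u}_1 + \cdots + \widehat{u}_n$ as in the hypothesis; by Remark \ref{re:suma} one has $u_{\mathbf{l}}(t,z) = \sum_{j=1}^n \Ss_{k_j, d_j}\widehat{u}_j(t,z)$, and each summand depends only on the connected nonsingular component $(d_{j,l_j}, d_{j,l_j+1})$ containing $d_j$. Letting each $d_j$ sweep its interval glues these $k_j$-sums into a single holomorphic function on $V_{\mathbf{l}}(\varepsilon, r) \times D$, for some $r > 0$ and every sufficiently small $\varepsilon > 0$.

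For (iii), properties (a), (c), (e), (f) of Definition \ref{df:maximal} are essentially immediate. Condition (a) is the length bound $|I_{1,l_1} \cap \cdots \cap I_{n,l_n}| > \frac{\pi}{k_1}$ built into $\mathcal{J}$; (c) follows because moment Laplace transforms and the operators $\Bo_{m_j}$ commute with $\widetilde{P}(\partial_{m_1,t},\partial_{m_2,z})$ (by Proposition \ref{pr:commutation}) and preserve the Cauchy data at $t=0$, so $u_{\mathbf{l}}$ is a normalised actual solution; (e) is built into the construction; and (f) holds since any admissible nonsingular $\mathbf{d}$ satisfies (\ref{eq:multi}) for some $\mathbf{l}$, which (after possibly a small nonsingular perturbation) lies in $\mathcal{J}$, and the corresponding multisum coincides with $u_{\mathbf{l}}$ by the independence argument from (ii). Condition (b) is obtained by noting that, for any $\theta \in W_r$, the multidirection $(\theta, \ldots, \theta)$ is trivially admissible, and pairing it with a choice of $l_j$ for which $\theta$ lies in the interior of $I_{j,l_j}$ exhibits a tuple $\mathbf{l} \in \mathcal{J}$ with $\theta \in V_{\mathbf{l}}(\varepsilon, r)$ for sufficiently small $\varepsilon > 0$.

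The hard part will be condition (d): showing $u_{\mathbf{l}} \not\equiv u_{\mathbf{l}'}$ on any common open subset of $V_{\mathbf{l}} \cap V_{\mathbf{l}'}$ when $\mathbf{l} \neq \mathbf{l}'$. Two such chambers must be separated by at least one Stokes line $\mathcal{L}_{d_{j,l}}$ of some level $k_j$, and the associated jump at $z=0$ was identified, in the discussion preceding Remark \ref{re:z}, as the moment Laplace transform
\[
J_{\mathcal{L}_{d_{j,l}}, k_j}\widehat{u}(t,0) = T_{m_j, d_{j,l}} \bigl[\Bo_{m_j}\widehat{u}_j(s,0)\bigr]_{d_{j,l}}
\]
of a hyperfunction in $\mathcal{H}^{k_j}(\mathcal{L}_{d_{j,l}})$. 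This hyperfunction is nonzero precisely because $d_{j,l}$ is singular of level $k_j$, so its transform is a nonzero holomorphic function on a sectorial neighbourhood of $\mathcal{L}_{d_{j,l}}$. Analytic continuation then rules out $u_{\mathbf{l}} \equiv u_{\mathbf{l}'}$ on any common open subset of $V_{\mathbf{l}} \cap V_{\mathbf{l}'}$, completing the verification of maximality.
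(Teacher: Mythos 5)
Your treatment of (i), (ii) and of conditions (a), (c), (e), (f) of Definition \ref{df:maximal} follows essentially the same route as the paper: the nested-interval construction of an admissible multidirection, the decomposition $\widehat{u}=\widehat{u}_1+\cdots+\widehat{u}_n$ together with the independence of each $\Ss_{k_j,d_j}\widehat{u}_j$ of the choice of $d_j$ within a nonsingular arc (the paper quotes \cite[Lemma 10]{B2} for this), and the algebra properties of multisums. Two smaller points. In the covering argument for (b), your prescription ``choose $l_j$ so that $\arg t$ lies in the interior of $I_{j,l_j}$'' does not by itself guarantee $\mathbf{l}\in\mathcal{J}$, since membership also requires the length bounds $|I_{j,l_j}\cap\cdots\cap I_{n,l_n}|>\pi/k_j$; the paper instead picks $l_j$ with $\arg t\in[d_{j,l_j},d_{j,l_j+1})$ and verifies those bounds explicitly. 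Also, Proposition \ref{pr:commutation} only concerns commutation of the Borel transform with $P(\partial_{m,t})$; commutation of the multisum with $\partial_{m_2,z}$ is a separate fact, which the paper takes from the moment version of \cite[Theorem 6.2]{B0}.

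The genuine gap is in condition (d). The paper argues by contradiction: if $u_{\mathbf{l}}\equiv u_{\tilde{\mathbf{l}}}$ on the overlap, the Relative Watson lemma \cite[Proposition 2.1]{M-R} forces $\Ss_{k_j,d_j}\widehat{u}_j=\Ss_{k_j,\tilde{d}_j}\widehat{u}_j$ separately for each level $j$, whence some singular direction $d_{i,l_i}$ lying between $\tilde{d}_i$ and $d_i$ would have to be nonsingular --- a contradiction. Your replacement, ``the jump across the separating Stokes line is the Laplace transform of a nonzero hyperfunction, hence nonzero,'' has two holes. First, for general $\mathbf{l}\neq\mathbf{l}'$ with $V_{\mathbf{l}}\cap V_{\mathbf{l}'}\neq\emptyset$ the chambers need not be adjacent: the difference $u_{\mathbf{l}}-u_{\mathbf{l}'}$ is a telescoping sum of several jumps, possibly at several levels, and you have not excluded cancellation among them; the level-by-level separation supplied by the Relative Watson lemma is precisely what is missing here. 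Second, the nonvanishing claim itself is unsupported: singularity of $d_{j,l}$ for the $\Oo(D)$-valued series $\widehat{u}_j$ means that $\Bo_{m_j}\widehat{u}_j(s,\cdot)$ fails to continue with the required growth as an $\EE$-valued function, which does not imply that its restriction to $z=0$ defines a nonzero class in $\mathcal{H}^{k_j}(\mathcal{L}_{d_{j,l}})$; and the injectivity of the moment Laplace transform on such hyperfunctions, which you also use tacitly, is essentially Watson's lemma again, so without invoking that lemma explicitly the argument is circular.
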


\begin{Uw}
 Observe that $\mathcal{L}_{d_{j,l}}$ and $\mathcal{L}_{d_{j,l}\pm\frac{\pi}{2k_j}}$, with $d_{j,l}$ satisfying the assumptions of
 Theorem \ref{th:general}, are respectively Stokes
 and anti-Stokes lines of level $j$ for $l=1,\dots,n_j$ and $j=1,\dots,n$. They play an important
 role in our description of the maximal family of solutions of (\ref{eq:pde}).
\end{Uw}

\begin{proof}[Proof of Theorem \ref{th:general}]
 (i) First, observe that condition
 \begin{gather*}
  |I_{j,l_j}\cap\dots\cap I_{n,l_n}|>\frac{\pi}{k_j}\quad\textrm{for}\quad j=1,\dots,n
 \end{gather*}
 guarantees that there exist $d_j\in(d_{j,l_j},d_{j,l_j+1})$, $j=1,\dots,n$, such that
 $$
 (d_1-\frac{\pi}{2k_1},d_1+\frac{\pi}{2k_1})\subseteq (d_2-\frac{\pi}{2k_2}, d_2+\frac{\pi}{2k_2})\subseteq\dots\subseteq
 (d_n-\frac{\pi}{2k_n}, d_n+\frac{\pi}{2k_n}).
 $$
 It means that for every $\mathbf{l}\in\mathcal{J}$ one can find an admissible multidirection 
 $\mathbf{d}=(d_1,\dots,d_n)$ satisfying (\ref{eq:multi}).
 
 Moreover, for every admissible multidirections $\mathbf{d}=(d_1,\dots,d_n)$ and
 $\mathbf{\tilde{d}}=(\tilde{d}_1,\dots,\tilde{d}_n)$ such that $d_j,\tilde{d}_j\in (d_{j,l_j},d_{j,l_{j}+1})$ for $j=1,\dots,n$ we have
 $$
 \Ss_{\mathbf{k},\mathbf{d}}\widehat{u}=\Ss_{k_1,d_1}\widehat{u}_1+\cdots+\Ss_{k_n,d_n}\widehat{u}_n=
 \Ss_{k_1,\tilde{d}_1}\widehat{u}_1+\cdots+\Ss_{k_n,\tilde{d}_n}\widehat{u}_n=\Ss_{\mathbf{k},\mathbf{\tilde{d}}}\widehat{u},
 $$
 since by \cite[Lemma 10]{B2} 
 \begin{gather*}
  \Ss_{k_j,d_j}\widehat{u}_j=\Ss_{k_j,\tilde{d}_j}\widehat{u}_j\quad \textrm{for every} \quad j=1,\dots,n.
 \end{gather*}
 It means that for every $\mathbf{l}\in\mathcal{J}$ the function $u_{\mathbf{l}}$ is well defined.
 
 To show (ii), observe that for every sufficiently small
 $\varepsilon>0$ there exists $r>0$ such that $\Ss_{k_j,d_j}\widehat{u}_j$ is analytically continued to the set 
 $$
 \{t\in W\colon |t|\in(0,r),\ (\arg t-\frac{\varepsilon}{2},\arg t +\frac{\varepsilon}{2})
 \subseteq I_{j,l_j}\}\times D\quad\textrm{for every}\quad j=1,\dots,n.
 $$
 Hence the whole function $u_{\mathbf{l}}$ is analytically continued to the set $V_{\mathbf{l}}(\varepsilon,r)\times D$.
 
 Finally we prove (iii). 
 Since the inequality $|I_{1,l_1}\cap \dots\cap I_{n,l_n}|>\frac{\pi}{k_1}$ holds for every $\mathbf{l}\in\mathcal{J}$,
 we are able to take such small $\varepsilon > 0$ that the opening of $V_{\mathbf{l}}(\varepsilon,r)$ ($V_{\mathbf{l}}$ for short)
 is greater than $\frac{\pi}{k_1}$.
 
 We claim that $\{V_{\mathbf{l}}\}_{\mathbf{l}\in\mathcal{J}}$ is a covering of $W_r$. To this end we take any $t\in W_r$. Then we may choose
 $\mathbf{l}\in\NN^n$ such that $1\leq l_j\leq n_j$ and $\arg t \in [d_{j,l_j},d_{j,l_j+1})$ for $j=1,\dots,n$.
 For such choice of $\mathbf{l}$ there exists $\delta>0$ such that
 $$
 [\arg t - \frac{\pi}{2k_j}+\frac{\delta}{2}, \arg t + \frac{\pi}{2k_j}+\delta] \subset I_{j,l_j}\cap\cdots\cap I_{n,l_n}\quad\textrm{for}\quad j=1,\dots,n.
 $$
 It means that $|I_{j,l_j}\cap \dots\cap I_{n,l_n}|\geq \frac{\pi}{k_j}+\frac{\delta}{2}>\frac{\pi}{k_j}$ for $j=1,\dots,n$, so $\mathbf{l}\in\mathcal{J}$
 and $t\in V_{\mathbf{l}}$. By the freedom
 of choice of $t\in W_r$, $\{V_{\mathbf{l}}\}_{\mathbf{l}\in\mathcal{J}}$ is a covering of $W_r$.

 By the moment version of \cite[Theorem 6.2]{B0} we conclude that the space of $\mathbf{k}$-multisummable series in a multidirection
 $\mathbf{d}$ is a moment differential algebra over $\CC$.
 It means that it is a linear space, which is also closed under multiplication
 and moment differentiations, and which for any $\mathbf{k}$-multisummable series $\widehat{f}$ and $\widehat{g}$ satisfies:
 $\Ss_{\mathbf{k},\mathbf{d}}(\widehat{f}+\widehat{g})=\Ss_{\mathbf{k},\mathbf{d}}\widehat{f}+\Ss_{\mathbf{k},\mathbf{d}}\widehat{g}$, $\Ss_{\mathbf{k},\mathbf{d}}(\widehat{f}\cdot\widehat{g})=\Ss_{\mathbf{k},\mathbf{d}}\widehat{f}\cdot\Ss_{\mathbf{k},\mathbf{d}}\widehat{g}$,
 $\Ss_{\mathbf{k},\mathbf{d}}(\partial_{m_1,t}\widehat{f})=
 \partial_{m_1,t}(\Ss_{\mathbf{k},\mathbf{d}}\widehat{f})$
 and $\Ss_{\mathbf{k},\mathbf{d}}(\partial_{m_2,z}\widehat{f})=
 \partial_{m_2,z}(\Ss_{\mathbf{k},\mathbf{d}}\widehat{f}$).
 
 Hence
  \begin{gather*}
  P(\partial_{m_1,t},\partial_{m_2,z})u_{\mathbf{l}}=P(\partial_{m_1,t},\partial_{m_2,z})\Ss_{\mathbf{k},\mathbf{d}}\widehat{u}=
  \Ss_{\mathbf{k},\mathbf{d}}P(\partial_{m_1,t},\partial_{m_2,z})\widehat{u}=0\ \textrm{on}\ V_{\mathbf{l}}\times D.
 \end{gather*}
 Additionally, since 
 $\widehat{u}(t,z)=\sum_{j=0}^{\infty}u_j(z)t^j$ on $V_{\mathbf{l}}\times D$, by \cite[Proposition 8]{B2} and by the definition
 of multisummable series we get
 \begin{gather*}
   \lim\limits_{t\to 0, t\in V_{\mathbf{l}}}\partial_{m_1,t}^ju_{\mathbf{l}}(t,z)=\frac{m_1(j)}{m_1(0)}u_j(z)=\varphi_j(z)\quad \textrm{for}\quad j=0,\dots,N-1.
 \end{gather*}
 Therefore $u_{\mathbf{l}}$ is an actual solution of (\ref{eq:pde}) for ${\mathbf{l}}\in\mathcal{J}$.
 
 Now, assume that $V_{\mathbf{l}}\cap V_{\tilde{\mathbf{l}}}\neq \emptyset$ and $u_{\mathbf{l}}\equiv u_{\tilde{\mathbf{l}}}$ on
 $(V_{\mathbf{l}}\cap V_{\tilde{\mathbf{l}}})\times D$ for some $\mathbf{l},\tilde{\mathbf{l}}\in\mathcal{J}$ and
 $\mathbf{l}\neq \tilde{\mathbf{l}}$. It means that there exists admissible multidirections $\mathbf{d}=(d_1,\dots,d_n)$, $d_j\in(d_{j,l_j},
 d_{j,l_{j+1}})$ and $\tilde{\mathbf{d}}=(\tilde{d}_1,\dots,\tilde{d}_n)$, $\tilde{d}_j\in(d_{j,\tilde{l}_j},d_{j,\tilde{l}_{j+1}})$ such that
 $$
 u_{\mathbf{l}}=\Ss_{k_1,d_1}\widehat{u}_1+\cdots+\Ss_{k_n,d_n}\widehat{u}_n=
 \Ss_{k_1,\tilde{d}_1}\widehat{u}_1+\cdots+\Ss_{k_n,\tilde{d}_n}\widehat{u}_n=u_{\tilde{\mathbf{l}}}.
 $$
 Since $\Ss_{k_j,d_j}\widehat{u}_j$ and $\Ss_{k_j,\tilde{d}_j}\widehat{u}_j$ are both analytic on the non-empty set 
 $$
 \{t\in W\colon |t|\in(0,r), (\arg t-\frac{\varepsilon}{2},\arg t +\frac{\varepsilon}{2})
 \subseteq I_{j,l_j}\cap I_{j,\tilde{l}_j}\}\times D,$$
 by the Relative Watson's lemma \cite[Proposition 2.1]{M-R} we conclude that
 $\Ss_{k_j,d_j}\widehat{u}_j=\Ss_{k_j,\tilde{d}_j}\widehat{u}_j$ for $j=1,\dots,n$.
 Since $\mathbf{l}\neq \tilde{\mathbf{l}}$, without loss of generality we may assume that $\tilde{d}_i<d_{i,l_i}<d_i$ for some $i\in\{1,\dots,n\}$.
This contradicts the fact that $d_{i,l_i}$ is a singular direction of level $k_i$.
So if
$V_{\mathbf{l}}\cap V_{\tilde{\mathbf{l}}}\neq \emptyset$ then $u_{\mathbf{l}}\not\equiv u_{\tilde{\mathbf{l}}}$ on
 $(V_{\mathbf{l}}\cap V_{\tilde{\mathbf{l}}})\times D$ for every $\mathbf{l},\tilde{\mathbf{l}}\in\mathcal{J}$,
 $\mathbf{l}\neq \tilde{\mathbf{l}}$.
 
 By the construction of the family $\{u_{\mathbf{l}}\}_{\mathbf{l}\in\mathcal{J}}$, the last two conditions in Definition \ref{df:maximal} are
 also satisfied, which completes the proof.
\end{proof}

\section{General linear moment partial differential equations with constant coefficients}
We will study the Stokes phenomenon and the maximal family of solutions for the normalised formal solution $\widehat{u}$ of (\ref{eq:pde}).
Let us recall that we may reduce the Cauchy problem (\ref{eq:pde}) of a general linear moment partial differential equation with constant
coefficients to
a family of the Cauchy problems of simple moment pseudodifferential equations. Namely we have
\begin{Stw}[{\cite[Theorem 1]{Mic8}}]
\label{pr:family}
Let $\widehat{u}$ be the normalised formal solution of (\ref{eq:pde}).
Then
$\widehat{u}=\sum_{\alpha=1}^l\sum_{\beta=1}^{N_{\alpha}}\widehat{u}_{\alpha\beta}$ with
   $\widehat{u}_{\alpha\beta}$ being a formal solution of a simple pseudodifferential equation
   \begin{equation*}
    \left\{
    \begin{array}{l}
     (\partial_{m_1,t}-\lambda_{\alpha}(\partial_{m_2,z}))^{\beta}u_{\alpha\beta}=0\\
     \partial_{m_1,t}^j u_{\alpha\beta}(0,z)=0\ \ (j=0,\dots,\beta-2)\\
     \partial_{m_1,t}^{\beta-1} \widehat{u}_{\alpha\beta}(0,z)=\lambda_{\alpha}^{\beta-1}(\partial_{m_2,z})\varphi_{\alpha\beta}(z),
    \end{array}
    \right.
   \end{equation*}
   where $\varphi_{\alpha\beta}(z):=\sum_{j=0}^{N-1}d_{\alpha\beta j}(\partial_{m_2,z})
   \varphi_j(z)\in\OO_{1/\gamma}(D)$ and $d_{\alpha\beta j}(\zeta)$ are some holomorphic
   functions of the variable $\xi=\zeta^{1/\gamma}$ and of polynomial growth. 
   \par
   Moreover, if $q_{\alpha}$ is a pole order of $\lambda_{\alpha}(\zeta)$
   and $\overline{q}_{\alpha}=\max\{0,q_{\alpha}\}$,
   then  $\widehat{u}_{\alpha\beta}\in\Oo_{1/\gamma}(D)[[t]]_{\overline{q}_{\alpha}s_2 - s_1}$.
\end{Stw}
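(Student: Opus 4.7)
The plan is to reduce the Cauchy problem (\ref{eq:pde}) to the family of simple Cauchy problems by partial fraction decomposition of the symbol in the variable $\lambda$. Since $\widetilde{P}(\lambda,\zeta)=\prod_{\alpha=1}^{l}(\lambda-\lambda_{\alpha}(\zeta))^{N_{\alpha}}$ has degree $N$ in $\lambda$, while $\lambda^{j}$ has degree $j<N$ for $j=0,\dots,N-1$, the standard partial fraction expansion (treating $\zeta$ as a parameter) yields uniquely determined coefficients $d_{\alpha\beta j}(\zeta)$ with
\[
\frac{\lambda^{j}}{\widetilde{P}(\lambda,\zeta)}=\sum_{\alpha=1}^{l}\sum_{\beta=1}^{N_{\alpha}}\frac{d_{\alpha\beta j}(\zeta)}{(\lambda-\lambda_{\alpha}(\zeta))^{\beta}}.
\]
Because each $\lambda_{\alpha}(\zeta)$ is holomorphic in $\xi=\zeta^{1/\gamma}$ for $|\zeta|\geq r_{0}$ with asymptotic $\lambda_{\alpha}(\zeta)\sim\lambda_{\alpha}\zeta^{q_{\alpha}}$, the residue-type formulas for $d_{\alpha\beta j}(\zeta)$ produce functions that are again holomorphic in $\xi$ outside a compact set and of polynomial growth at infinity. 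Thus the moment pseudodifferential operators $d_{\alpha\beta j}(\partial_{m_{2},z})$ are well defined on $\OO_{1/\gamma}(D)$, and so is $\varphi_{\alpha\beta}(z)=\sum_{j=0}^{N-1}d_{\alpha\beta j}(\partial_{m_{2},z})\varphi_{j}(z)\in\OO_{1/\gamma}(D)$.

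Next I would define $\widehat{u}_{\alpha\beta}$ to be the unique normalised formal power series solution of the simple Cauchy problem displayed in the statement; existence and uniqueness are immediate since the principal part of $(\partial_{m_{1},t}-\lambda_{\alpha}(\partial_{m_{2},z}))^{\beta}$ with respect to $\partial_{m_{1},t}$ is $\partial_{m_{1},t}^{\beta}$, so the Cauchy data determine all coefficients in $t$ recursively. Setting $\widehat{v}:=\sum_{\alpha,\beta}\widehat{u}_{\alpha\beta}$, the fact that the factors $\partial_{m_{1},t}-\lambda_{\alpha}(\partial_{m_{2},z})$ commute (they are polynomials in the two commuting moment operators) gives $\widetilde{P}(\partial_{m_{1},t},\partial_{m_{2},z})\widehat{v}=0$. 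The identification step is to check $\partial_{m_{1},t}^{j}\widehat{v}(0,z)=\varphi_{j}(z)$ for $j=0,\dots,N-1$; computing $\partial_{m_{1},t}^{j}\widehat{u}_{\alpha\beta}(0,z)$ in closed form from the simple Cauchy data and summing over $(\alpha,\beta)$ produces $\varphi_{j}(z)$ precisely by the partial fraction identity above with $\lambda$ replaced by $\lambda_{\alpha}(\partial_{m_{2},z})$. Uniqueness of the normalised formal solution of (\ref{eq:pde}) then forces $\widehat{u}=\widehat{v}$.

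To obtain the Gevrey bound for $\widehat{u}_{\alpha\beta}$, I would write $\widehat{u}_{\alpha\beta}(t,z)=\sum_{n=0}^{\infty}u_{\alpha\beta,n}(z)t^{n}$ and exploit the recurrence generated by $(\partial_{m_{1},t}-\lambda_{\alpha}(\partial_{m_{2},z}))^{\beta}\widehat{u}_{\alpha\beta}=0$ to express $u_{\alpha\beta,n}$ as iterated applications of $\lambda_{\alpha}(\partial_{m_{2},z})$ to $\varphi_{\alpha\beta}$, divided by products of $m_{1}$-moments. Combined with Proposition \ref{pr:order} and the asymptotic $\lambda_{\alpha}(\zeta)\sim\lambda_{\alpha}\zeta^{q_{\alpha}}$, the $E_{1/\gamma}(D)$-norm of $u_{\alpha\beta,n}$ is controlled by a constant times $m_{2}(\overline{q}_{\alpha}n)/m_{1}(n)$, which has order $\overline{q}_{\alpha}s_{2}-s_{1}$, yielding the claimed Gevrey class. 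The main obstacle is the quantitative estimate of $\lambda_{\alpha}(\partial_{m_{2},z})^{n}\varphi_{\alpha\beta}$ in $\OO_{1/\gamma}(D)$: this requires working with the integral representation of the moment pseudodifferential operator together with Cauchy-type bounds on $e_{m_{2}}$ and $E_{\widetilde{m}_{2}}$, and a separation into the Kowalevskian case $q_{\alpha}\le 0$ (where $\overline{q}_{\alpha}=0$, the Gevrey order $-s_{1}$ is non-positive, and the series is in fact convergent) from the genuinely non-Kowalevskian case $q_{\alpha}>0$.
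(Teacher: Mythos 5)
This proposition is not proved in the paper at all: it is imported verbatim from \cite[Theorem 1]{Mic8}, so the only available comparison is with the proof given there, and your sketch follows essentially the same strategy (partial fraction decomposition of the symbol in $\lambda$, reduction to simple factors, and moment-pseudodifferential estimates of $\lambda_{\alpha}(\partial_{m_2,z})^n\varphi_{\alpha\beta}$ for the Gevrey order). One detail needs correcting: since the simple Cauchy data force $\partial_{m_1,t}^{i}\widehat{u}_{\alpha\beta}(0,z)=\binom{i}{\beta-1}\lambda_{\alpha}^{i}(\partial_{m_2,z})\varphi_{\alpha\beta}(z)$, matching all $N$ data of (\ref{eq:pde}) requires decomposing $R_j(\lambda,\zeta)/\widetilde{P}(\lambda,\zeta)$, where $R_j$ is the degree-$(N-1)$ numerator produced by the recursion (equivalently by the generating function $\sum_{i\ge 0}\partial_{m_1,t}^{i}\widehat{u}(0,z)\lambda^{-i-1}$), rather than $\lambda^{j}/\widetilde{P}(\lambda,\zeta)$ itself, whose expansion at $\lambda=\infty$ matches the wrong initial data; this is pure bookkeeping and does not affect the holomorphy in $\xi=\zeta^{1/\gamma}$ or the polynomial growth of the resulting $d_{\alpha\beta j}(\zeta)$. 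The remaining step you flag as the main obstacle (the bound $\|\lambda_{\alpha}(\partial_{m_2,z})^{n}\varphi_{\alpha\beta}\|_{E_{1/\gamma}(D)}\le AB^{n}\Gamma_{\overline{q}_{\alpha}s_2}(n)$) is exactly the content of the auxiliary lemmas of \cite{Mic8}, so your outline is consistent with the cited proof.
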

 
For this reason we will study the following simple moment pseudodifferential equation 
\begin{equation}
 \label{eq:simple}
 \begin{cases}
  (\partial_{m_1,t}-\lambda(\partial_{m_2,z}))^{\beta}u=0\\
  \partial_{m_1,t}^j u(0,z)=0\ (j=0,\dots,\beta-2)\\
  \partial_{m_1,t}^{\beta-1}u(0,z)=\lambda^{\beta-1}(\partial_{m_2,z})\varphi(z)\in\OO_{1/\gamma}(D),
 \end{cases}
\end{equation}
where $m_1,m_2$ are moment functions of orders respectively $s_1,s_2>0$ such that $qs_2>s_1$, $\gamma\in\NN$,
$\lambda(\zeta)\sim \lambda_0\zeta^q$ with
$q=\mu/\nu$ for some relatively prime $\mu,\nu\in\NN$ satisfying $q\gamma\in\NN$.

We start from the following representation of summable solutions of (\ref{eq:simple}).

\begin{Stw}
\label{pr:sum}
 Let $d\in\RR$, $K=(qs_2-s_1)^{-1}$, $\varepsilon>0$ and $m(u)$ be a moment function of order $1/K$. Suppose that
 $\widehat{u}(t,z)\in\Oo_{1/\gamma}(D)[[t]]$
 is the unique formal power series solution of the Cauchy problem (\ref{eq:simple}) and
 \begin{equation}
 \label{eq:condition}
 \varphi(z)\in\OO_{1/\gamma}^{qK}\big(\bigcup_{l=0}^{q\gamma-1}\widehat{S}_{(d+\arg\lambda_0+2l\pi)/q}(\varepsilon/q)\big).
 \end{equation}
 Then $\widehat{u}(t,z)$ is $K$-summable in the direction $d$ and for every $\tilde{d}\in (d-\frac{\varepsilon}{2}, d+\frac{\varepsilon}{2})$
 and for every $\tilde{\varepsilon}\in(0,\varepsilon)$ there exists $r>0$ such that its $K$-sum
 $u^{\tilde{d}}\in\OO_{1,1/\gamma}(S_{\tilde{d}}(\pi/K-\tilde{\varepsilon},r)\times D)$ is given by
 \begin{equation}
  \label{eq:actual}
  u^{\tilde{d}}(t,z)=
  \mathcal{S}_{K,\tilde{d}}\widehat{u}(t,z)=(T_{m,\tilde{d}}v)(t,z)
  =\int_{e^{i\tilde{d}}\RR_+}e_{m}(s/t)v(s,z)\frac{ds}{s}, 
 \end{equation}
 where $v(t,z)=\Bo_{m}\widehat{u}(t,z)$
 has the integral representation
 \begin{equation}
  \label{eq:v_integral}
  v(t,z)=\frac{t^{\beta-1}}{(\beta-1)!}\partial_t^{\beta-1}
   \frac{1}{2\gamma\pi i}\oint_{|w|=\varepsilon}^{\gamma}\varphi(w)\int_{r_0e^{i\theta}}^{\infty e^{i\theta}}
   E_{\overline{m}_1}(t\lambda(\zeta))
    E_{\tilde{m}_2}(\zeta^{1/\gamma} z^{1/\gamma})\frac{e_{m_2}(\zeta w)}{\zeta w}\,d\zeta\,dw
 \end{equation}
with $\overline{m}_1(u)=m_1(u)m(u)$, $\tilde{m}_2(u)=m_2(u/\gamma)$ and $\theta\in(-\arg w - \frac{s_2\pi}{2},-\arg w + \frac{s_2\pi}{2})$.
\end{Stw}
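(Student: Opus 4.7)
The plan is to first identify the formal power series solution $\widehat{u}$ of (\ref{eq:simple}) explicitly, then apply the $m$-moment Borel transform and recognise the result as a moment pseudodifferential operator applied to $\varphi$, and finally estimate the resulting integral to produce the required exponential bound. Writing $\widehat{u}(t,z)=\sum_{j\geq 0}b_j(z)t^j/m_1(j)$ and substituting into the equation, the initial conditions together with Pascal's rule give
$$
\widehat{u}(t,z)=\sum_{j=\beta-1}^{\infty}\binom{j}{\beta-1}\frac{\lambda^{j}(\partial_{m_2,z})\varphi(z)}{m_1(j)}\,t^{j}=\frac{t^{\beta-1}}{(\beta-1)!}\,\partial_t^{\beta-1}\widehat{u}_0(t,z),
$$
where $\widehat{u}_0(t,z):=\sum_{j\geq 0}m_1(j)^{-1}\lambda^{j}(\partial_{m_2,z})\varphi(z)\,t^{j}$ is the normalised formal solution for $\beta=1$. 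A direct computation, using the identity $(\partial_{m_1,t}-\lambda(\partial_{m_2,z}))\widehat{u}^{(\beta)}=\lambda(\partial_{m_2,z})\widehat{u}^{(\beta-1)}$ (which follows from $\binom{j+1}{\beta-1}-\binom{j}{\beta-1}=\binom{j}{\beta-2}$), confirms $(\partial_{m_1,t}-\lambda(\partial_{m_2,z}))^{\beta}\widehat{u}=0$. Setting $\overline{m}_1:=m_1 m$, which by Proposition \ref{pr:order} has order $s_1+1/K=qs_2$, and applying $\Bo_m$ term by term yields
$$
v(t,z)=\Bo_m\widehat{u}(t,z)=\frac{t^{\beta-1}}{(\beta-1)!}\,\partial_t^{\beta-1}v_0(t,z),\qquad v_0(t,z)=E_{\overline{m}_1}\bigl(t\lambda(\partial_{m_2,z})\bigr)\varphi(z).
$$

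Since $\zeta\mapsto E_{\overline{m}_1}(t\lambda(\zeta))$ is holomorphic in $\zeta^{1/\gamma}$ for $|\zeta|\geq r_0$, one can apply the integral formula from the definition of moment pseudodifferential operator to the symbol $E_{\overline{m}_1}(t\lambda(\zeta))$: this gives the inner-integral representation of $v_0$ and, after differentiating $(\beta-1)$ times in $t$, formula (\ref{eq:v_integral}). The range $\theta\in(-\arg w-s_2\pi/2,-\arg w+s_2\pi/2)$ is forced by the sector $S_0(\pi/k_2)$ on which $e_{m_2}$ is holomorphic. For $|t|$ sufficiently small compared to $\varepsilon^{q}$ the double integral converges absolutely, because the growth $|E_{\overline{m}_1}(t\lambda(\zeta))|\lesssim\exp\bigl(C_1|t|^{1/(qs_2)}|\zeta|^{1/s_2}\bigr)$ (a kernel function of order $1/(qs_2)$) is dominated along the ray by the decay $|e_{m_2}(\zeta w)|\lesssim\exp\bigl(-|\zeta w|^{1/s_2}/C_2\bigr)$ coming from $|w|=\varepsilon$.

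The main and most delicate step is to prove that $v$ extends holomorphically in $t$ to a disc-sector $\widehat{S}_d(\varepsilon_0)$, for some $\varepsilon_0>0$, as an $E_{1/\gamma}(D)$-valued function with exponential growth of order $K$. This is achieved by a simultaneous deformation of both contours of integration in (\ref{eq:v_integral}): the $\zeta$-ray is rotated so that $t\lambda_0\zeta^{q}$, the argument of $E_{\overline{m}_1}$, remains essentially in the left half-plane, keeping the exponential bound on $E_{\overline{m}_1}$ effective, while the circle $|w|=\varepsilon$ is pushed out to infinity along the $q\gamma$ rays $\psi_l=(d+\arg\lambda_0+2l\pi)/q$, $l=0,\dots,q\gamma-1$. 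The factor $q\gamma$ appears because $\varphi\in\Oo_{1/\gamma}$ produces $\gamma$ sheets of the universal cover and the branching $\lambda(\zeta)\sim\lambda_0\zeta^{q}$ then splits each sheet into $q$ critical directions. The hypothesis (\ref{eq:condition}), that $\varphi$ has exponential growth of order $qK$ on precisely the disc-sectors around these rays, is exactly what compensates the remaining $w$-dependence after performing the $\zeta$-integration, under the natural scaling $|t|^{1/(qs_2)}\sim|w|^{1/s_2}$ (equivalently $|t|\sim|w|^{q}$), and produces the final bound of order $K$ in $t$.

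Once $v\in\Oo^{K}(\widehat{S}_d,E_{1/\gamma}(D))$ is established, $K$-summability of $\widehat{u}$ in direction $d$ and the formula $u^{\tilde d}=T_{m,\tilde d}v$ in (\ref{eq:actual}) follow directly from the definitions of $K$-summability and of the moment Laplace transform $T_{m,\tilde d}$. The principal obstacle is the contour-deformation argument of the preceding paragraph: one has to bookkeep the three angles $\arg t$, $\arg\zeta$, $\arg w$ simultaneously, under the branching factors $\gamma$ and $q$, so that the final exponential estimate matches precisely the order $K=(qs_2-s_1)^{-1}$ and holds uniformly in $\tilde d$ over an interval of length at least $\varepsilon$ around $d$.
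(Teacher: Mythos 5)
Your proposal follows essentially the same route as the paper: apply $\Bo_m$ to the formal solution, identify the result with the integral representation (\ref{eq:v_integral}), establish exponential growth of order $K$ of $v$ on a disc-sector in the direction $d$, and then apply $T_{m,\tilde d}$. The difference is one of packaging: the paper gets the integral representation by noting (via Proposition \ref{pr:commutation}) that $v$ solves the Cauchy problem with $\partial_{\overline m_1,t}$ in place of $\partial_{m_1,t}$ and then citing \cite[Lemma 3]{Mic8}, and it obtains $v\in\OO_{1,1/\gamma}^{K}(\widehat S_d(\varepsilon)\times D)$ from condition (\ref{eq:condition}) by citing \cite[Lemma 4]{Mic8}; you instead write out the explicit coefficients $\binom{j}{\beta-1}\lambda^j(\partial_{m_2,z})\varphi/m_1(j)$ and sketch the proofs of those two lemmas. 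Your sketch is consistent with how those lemmas actually work (the symbol $E_{\overline m_1}(t\lambda(\zeta))$ is not of polynomial growth, so the pseudodifferential formula must be justified by the domination of $e_{m_2}(\zeta w)$ over $E_{\overline m_1}$, exactly as you say, and the growth estimate does come from deforming the $w$- and $\zeta$-contours toward the rays $(d+\arg\lambda_0+2l\pi)/q$ where (\ref{eq:condition}) supplies the order-$qK$ bound). The one step you flag but do not carry out --- the quantitative bookkeeping of $\arg t$, $\arg\zeta$, $\arg w$ yielding precisely order $K$ --- is also the step the paper does not reproduce, delegating it to \cite[Lemma 4]{Mic8}; so your argument is acceptable at the same level of rigour as the printed proof, but would need that lemma (or its proof) to be genuinely self-contained.
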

\begin{proof}
 First, observe that by Proposition \ref{pr:family} we get $\widehat{u}(t,z)\in \OO_{1/\gamma}(D)[[t]]_{qs_2-s_1}$.
 Hence the function $v(t,z):=\Bo_m\widehat{u}(t,z)$ belongs to the space $\OO_{1,1/\gamma}(D^2)$.
 Moreover, by Proposition \ref{pr:commutation} it 
 satisfies
 \begin{equation*}
  \begin{cases}
   (\partial_{\overline{m}_1,t}-\lambda(\partial_{m_2,z}))^{\beta}v=0\\
   \partial_{\overline{m}_1,t}^j v(0,z)=0\ (j=0,\dots,\beta-2)\\
  \partial_{\overline{m}_1,t}^{\beta-1}v(0,z)=\lambda^{\beta-1}(\partial_{m_2,z})\varphi(z)\in\OO_{1/\gamma}(D).
  \end{cases}
 \end{equation*}
Hence by \cite[Lemma 3]{Mic8} we get the integral representation (\ref{eq:v_integral})
of $v(t,z)$.

Since $\varphi(z)$ satisfies (\ref{eq:condition}), by \cite[Lemma 4]{Mic8} we conclude that 
$v(t,z)\in\OO_{1,1/\gamma}^{K}(\widehat{S}_d(\varepsilon)\times D)$.
So, for every $\tilde{d}\in (d-\frac{\varepsilon}{2}, d+\frac{\varepsilon}{2})$ the function
$u^{\tilde{d}}(t,z):=T_{m,\tilde{d}}v(t,z)$ is well-defined and by the definitions of kernel functions
(Definitions \ref{df:moment} and \ref{df:small})
for every $\tilde{\varepsilon}\in(0,\varepsilon)$ there exists $r>0$ such that 
 $u^{\tilde{d}}\in\OO_{1,1/\gamma}(S_{\tilde{d}}(\pi/K-\tilde{\varepsilon},r)\times D)$. 
\end{proof}

Now we are ready to describe the Stokes phenomenon and the maximal family of solutions of the simple moment pseudodifferential
equation (\ref{eq:simple}) with the Cauchy data having the separate singular point at $z_0\in\CC\setminus\{0\}$.

\begin{Tw}
 \label{th:summability}
 Let $\widehat{u}$ be a formal solution of (\ref{eq:simple}) with $\varphi\in\OO_{1/\gamma}^{qK}(\widetilde{\CC\setminus\{z_0\}})$
 for some $z_0\in\CC\setminus\{0\}$.
 Set $K:=(qs_2-s_1)^{-1}$, $\delta_l:=q\arg z_0+\frac{2l\pi}{\nu}-\arg\lambda_0$ and $u_l:=u^{\tilde{d}}$ for 
 $\tilde{d}\in (\delta_l,\delta_{l+1})\mod 2q\pi$ (for $l=0,\dots,\mu-1$ with $\delta_{\mu}:=\delta_{0}+2q\pi$),
 where $u^{\tilde{d}}$ is  given by (\ref{eq:actual}). Finally, let $W_r=\{t\in W\colon 0<|t|<r\}$ for $r>0$, where $W$ is the Riemann surface
 of the function $t\mapsto t^{\frac{1}{q}}$.
 
 Then for every $\tilde{\varepsilon}>0$ there exists $r>0$ such that
 $u_l\in\OO_{1,1/\gamma}(S_{\delta_l+\frac{\pi}{\nu}}( (K^{-1}+\frac{2}{\nu})\pi - \tilde{\varepsilon}, r)\times D)$
 ($l=0,\dots,\mu-1$)
 and $\{u_0,\dots,u_{\mu-1}\}$ is a maximal family of solutions of (\ref{eq:simple}) on $W_r\times D$.
 
 Moreover, the sets $\mathcal{L}_{\delta_l}$ and $\mathcal{L}_{\delta_l\pm\frac{\pi}{2K}}$ ($l=0,\dots,\mu-1$) are respectively
 Stokes lines
 and anti-Stokes lines for $\widehat{u}$. The jump across the Stokes line $\mathcal{L}_{\delta_l}$ is given by
 \begin{equation*}
  J_{\mathcal{L}_{\delta_l}}\widehat{u}(t,0)= u_l(t,0)-u_{l-1}(t,0)=u^{\delta_l^+}(t,0)-u^{\delta_l^-}(t,0)
  =F_l(s,0)\Big[\frac{e_m(s/t)}{s}\Big],
 \end{equation*}
where $F_l(s,0)\in \mathcal{H}^{qK}(\mathcal{L}_{\delta_l})$ is defined by $F_l(s,0):=[v(s,0)]_{\delta_l}$ and
$v(s,z)=\Bo_m\widehat{u}(s,z)$ has the representation (\ref{eq:v_integral}).
\end{Tw}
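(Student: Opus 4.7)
The plan is to apply Proposition~\ref{pr:sum} in each nonsingular direction, identify the full set of singular directions from the singularity of $\varphi$ at $z_0$, and then assemble the resulting $K$-sums into a covering of $W_r\times D$ whose maximality and jumps can be verified using the framework of Section~5.

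First I would locate the singular directions. By Proposition~\ref{pr:sum}, $\widehat{u}$ is $K$-summable in a direction $d$ provided $\varphi$ extends with $qK$-exponential growth on every disc-sector $\widehat{S}_{(d+\arg\lambda_0+2l\pi)/q}(\varepsilon/q)$ for $l=0,\dots,q\gamma-1$. Since $\varphi\in\OO_{1/\gamma}^{qK}(\widetilde{\CC\setminus\{z_0\}})$, this condition fails exactly when one of these central rays coincides modulo $2\pi$ with the singular ray $\arg z=\arg z_0$. Solving and reducing modulo $2q\pi=2\pi\mu/\nu$, and using that $\gcd(\mu,\nu)=1$ so the residues $\{k\mu-l'\nu\bmod\mu\}$ fill $\ZZ/\mu\ZZ$, yields precisely the $\mu$ directions $\{\delta_l\}_{l=0}^{\mu-1}$. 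For any $\tilde d\in(\delta_l,\delta_{l+1})$ the sum $u^{\tilde d}$ from (\ref{eq:actual}) is independent of $\tilde d$ by analytic continuation, so $u_l$ is well-defined; and taking the union over $\tilde d\in(\delta_l,\delta_{l+1})$ of the sectors $S_{\tilde d}(\pi/K-\tilde\varepsilon,r)$ produced by Proposition~\ref{pr:sum} yields a sector of opening $(K^{-1}+2/\nu)\pi-\tilde\varepsilon$ bisected by $\delta_l+\pi/\nu$, which is the claim of part (i).

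Next I would verify the maximal family conditions of Definition~\ref{df:maximal}. Each $V_l$ has opening $>\pi/K$, and since consecutive centers $\delta_l+\pi/\nu$ differ by $2\pi/\nu$ while each opening exceeds $2\pi/\nu$, the $\mu$ sectors cover the full period $2q\pi$ of $W_r$. Each $u_l$ solves (\ref{eq:simple}) and realises the initial data because summation commutes with $\partial_{m_1,t}$ and $\lambda(\partial_{m_2,z})$; conditions (e), (f) are immediate from the construction. Nontriviality $u_l\not\equiv u_{l-1}$ on the overlap $V_l\cap V_{l-1}$ follows by contradiction via the Relative Watson lemma: if they coincided, their common analytic continuation across $\mathcal{L}_{\delta_l}$ would furnish a $K$-sum of $\widehat u$ in direction $\delta_l$, contradicting that $\delta_l$ is singular.

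Finally, the Stokes/anti-Stokes classification follows directly from Definition~\ref{df:stokes} and the remark following it. For the jump formula, the integral representation (\ref{eq:v_integral}) together with the $qK$-exponential growth of $\varphi$ shows that $v(s,0)\in\OO^{qK}(D\cup(S_{\delta_l}\setminus\mathcal{L}_{\delta_l}))$, so $F_l(s,0):=[v(s,0)]_{\delta_l}$ defines a hyperfunction in $\mathcal{H}^{qK}(\mathcal{L}_{\delta_l})$, and formula (\ref{eq:J}) delivers the displayed identity for $J_{\mathcal{L}_{\delta_l}}\widehat u(t,0)$. The main obstacle I expect is the singular-direction count: translating the failure of condition~(\ref{eq:condition}) into the exact enumeration $\{\delta_l\}_{l=0}^{\mu-1}$ of $\mu$ directions modulo $2q\pi$, which is where the coprimality of $\mu$ and $\nu$ plays its decisive role.
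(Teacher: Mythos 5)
Your proposal is correct and follows essentially the same route as the paper: Proposition \ref{pr:sum} gives $K$-summability in every direction avoiding the rays where condition (\ref{eq:condition}) fails, the coprimality of $\mu$ and $\nu$ yields exactly the $\mu$ singular directions $\delta_l$ modulo $2q\pi$, and the jump is read off from (\ref{eq:actual}) via the hyperfunction formula (\ref{eq:J}). The only cosmetic difference is that you verify the conditions of Definition \ref{df:maximal} directly (covering, Relative Watson's lemma for nontriviality, etc.), whereas the paper delegates this to the $k$-summable case of the maximal-family theorem (\cite[Theorem 3]{Mic-Pod}, i.e.\ the $n=1$ instance of Theorem \ref{th:general}); the content is the same.
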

\begin{proof}
First observe that, if $d\neq\delta_l\mod 2q\pi$ for $l=0,\dots,\mu-1$ then $\varphi$ satisfies the assumption (\ref{eq:condition})
for sufficiently small $\varepsilon>0$.
Hence by Proposition \ref{pr:sum}, $\widehat{u}$ is $K$-summable in a direction $\tilde{d}\in\RR$,
$\tilde{d}\neq \delta_l\mod 2q\pi$  for $l=0,\dots,\mu-1$
and its $K$-sum $u^{\tilde{d}}(t,z)$ satisfies (\ref{eq:actual}).

Observe that $u^{\tilde{d}}(t,z)=u^{\tilde{d}}(te^{2q\pi i},z)$ and $q$ is the smallest positive rational number for which
this equality holds.
Moreover, the set of singular directions of $\widehat{u}(t,z)$ modulo $2q\pi$ is given by $\{\delta_l\mod 2q\pi\colon l=0,\dots,\mu-1\}$.
Hence by \cite[Theorem 3]{Mic-Pod}, for every $\tilde{\varepsilon}>0$ there exists $r>0$ such that
$\{u_0,\dots,u_{\mu-1}\}$ with $u_l\in\OO_{1,1/\gamma}(S_{\delta_l+\frac{\pi}{\nu}}( (K^{-1}+\frac{2}{\nu})\pi - \tilde{\varepsilon}, r)\times D)$
($l=0,\dots,\mu-1$) is a maximal family
of solutions of (\ref{eq:simple}).
Moreover,
Stokes lines for $\widehat{u}$ are the sets
$\mathcal{L}_{\delta_l}$ and anti-Stokes lines for $\widehat{u}$ are the sets
$\mathcal{L}_{\delta_l\pm\frac{\pi}{2K}}$.

Now we are ready to calculate the jump across Stokes line $\mathcal{L}_{\delta_{l}}$.
\begin{equation*}
 J_{\mathcal{L}_{\delta_l}}\widehat{u}(t,z)=u^{\delta_l^+}(t,z)-u^{\delta_l^-}(t,z)
 \stackrel{(\ref{eq:actual})}{=}\int_{e^{i\delta_l^+}\RR_+}e_{m}(s/t)v(s,z)\frac{ds}{s}
 -\int_{e^{i\delta_l^-}\RR_+}e_{m}(s/t)v(s,z)\frac{ds}{s}.
\end{equation*}
Hence
$$ J_{\mathcal{L}_{\delta_l}}\widehat{u}(t,0)=F_l(s,0)\Big[\frac{e_{m}(s/t)}{s}\Big],
$$
where $F_l(s,0)\in \mathcal{H}^{qK}(\mathcal{L}_{\delta_l})$ is a hyperfunction on $\mathcal{L}_{\delta_l}$ defined by
$F_l(s,0):=[v(s,0)]_{\delta_l}$.
\end{proof}

Now we return to the general equation (\ref{eq:pde}).
For convenience we assume that
 \begin{equation}
   \label{eq:P_multi}
   P(\lambda,\zeta)=P_0(\zeta)\tilde{P}(\lambda,\zeta)=P_0(\zeta)\prod_{i=1}^{\tilde{n}}\prod_{\alpha=1}^{l_i}
   (\lambda-\lambda_{i\alpha}(\zeta))^{N_{i\alpha}},
 \end{equation}
  where $\lambda_{i\alpha}(\zeta)\sim \lambda_{i\alpha}\zeta^{q_i}$ is the root of the characteristic equation
  with $q_i\in\QQ$
  and $\lambda_{i\alpha}\in\CC\setminus\{0\}$ for
  $i=1,\dots,\tilde{n}$ and $\alpha=1,\dots,l_i$.
 Without loss of generality we may assume that there exist exactly $n$, $n\leq \tilde{n}$, pole orders $q_i$, which are greater than $s_1/s_2$,
 where $s_1,s_2$ are orders of moment functions $m_1,m_2$ respectively. We also assume that $s_1/s_2<q_1<\cdots<q_n<\infty$ and let
$K_i:=(q_is_2-s_1)^{-1}$ for $i=1,\dots,n$. Under the above conditions we have

\begin{Tw}
 \label{th:multisummability}
 Let $\widehat{u}$ be a normalised formal solution of (\ref{eq:pde}), $z_0\in\CC\setminus\{0\}$ and
 $\varphi_j(z)\in\OO^{q_nK_n}(\widetilde{\CC\setminus\{z_0\}})$ for $j=0,\dots,N-1$.
 Let $Q:=\frac{LCM(\mu_1,\dots,\mu_n)}{GCD(\nu_1,\dots,\nu_n)}$ and let
 $$
 \Lambda_i:=\{\delta\colon \delta=q_i\arg z_0 + \frac{2j\pi}{\nu_i}-\arg\lambda_{i\alpha} \mod 2Q\pi,\ 0\leq j\leq Q\nu_i-1,\ 1\leq\alpha\leq l_i\}
 $$
 for $i=1,\dots,n$. It means that we may assume that there exist $0\leq \delta_{i,1}<\dots < \delta_{i,n_i}<2Q\pi$ such that
 $\Lambda_i=\{\delta_{i,1},\dots,\delta_{i,n_i}\}$. Moreover let
 $$
 I_{i,j}:=(\delta_{i,j}-\frac{\pi}{2K_i},\delta_{i,j+1}+\frac{\pi}{2K_i})\quad\textrm{for}\quad j=1,\dots,n_i,\quad i=1,\dots,n,
 $$
 with $\delta_{i,n_i+1}:=\delta_{i,1}+2Q\pi$, 
 \begin{equation*}
 \mathcal{J}:=\{\mathbf{l}=(l_1,\dots,l_n)\in\NN^n\colon 1\leq l_i \leq n_i,\
 |I_{i,l_i}\cap \dots\cap I_{n,l_n}|>\frac{\pi}{K_i}\ \textrm{for}\ i=1,\dots,n\},
 \end{equation*}
 $\mathbf{K}:=(K_1,\dots,K_n)$
 and $W_r=\{t\in W\colon 0<|t|<r\}$ for $r>0$, where $W$ is the Riemann surface of the function $t\mapsto t^{1/Q}$.
 
 Then the following conditions holds:
 \begin{enumerate}
 \item[(a)] The formal solution $\widehat{u}$ is $\mathbf{K}$-multisummable, $\widehat{u}=\widehat{u}_0+\widehat{u}_1+\dots+\widehat{u}_n$,
 where $\widehat{u}_0$ is a convergent power series solution of
 \begin{equation}
 \label{eq:u_0}
 \big(\prod_{i=n+1}^{\tilde{n}}\prod_{\alpha=1}^{l_i}(\partial_{m_1,t}-\lambda_{i\alpha}(\partial_{m_2,z}))^{N_{i\alpha}}\big)u_0=0
 \end{equation}
 and $\widehat{u}_i$ is a $K_i$-summable power series solution of 
 \begin{equation}
 \label{eq:u_i}
 \big(\prod_{\alpha=1}^{l_i}(\partial_{m_1,t}-\lambda_{i\alpha}(\partial_{m_2,z}))^{N_{i\alpha}}\big)u_i=0\quad\textrm{for}\quad i=1,\dots,n.
 \end{equation}
 Moreover 
 \begin{equation}
 \label{eq:multisum}
 \mathcal{S}_{\mathbf{K},\mathbf{d}}\widehat{u}=u_0+\mathcal{S}_{K_1,d_1}\widehat{u}_1+\dots+\mathcal{S}_{K_n,d_n}\widehat{u}_n
 \end{equation}
 for any admissible nonsingular multidirection $\mathbf{d}=(d_1,\dots,d_n)$.
 \item[(b)] For every $\mathbf{l}\in\mathcal{J}$ the function $u_{\mathbf{l}}(t,z):=\mathcal{S}_{\mathbf{K},\mathbf{d}}\widehat{u}$ is
 a well defined actual solution of (\ref{eq:pde}),
 where $\mathbf{d}$ is an admissible nonsingular multidirection satisfying $d_i\in(\delta_{i,l_i},\delta_{i,l_i+1})$
 for $i=1,\dots,n$,.
 \item[(c)] For every $\varepsilon>0$ there exists $r>0$ such that $u_{\mathbf{l}}\in\OO_{1,1/\gamma}(V_{\mathbf{l}}(\varepsilon,r)\times D)$, where
 $$
 V_{\mathbf{l}}(\varepsilon,r):=\{t\in W_r\colon (\arg t - \frac{\varepsilon}{2},\arg t + \frac{\varepsilon}{2})\subseteq 
 I_{1,l_1}\cap\dots\cap I_{n,l_n}\}.
 $$
 \item[(d)] $\{u_{\mathbf{l}}\}_{\mathbf{l}\in\mathcal{J}}$ is a maximal family of solutions of (\ref{eq:pde}).
 \item[(e)] For every $i\in\{1,\dots,n\}$ the sets $\mathcal{L}_{\delta_{i,j}}$
  (resp.  $\mathcal{L}_{\delta_{i,j}\pm\frac{\pi}{2K_i}}$), $j=1,\dots,n_i$ are Stokes lines (resp. anti-Stokes lines) of level
  $K_i$.
 \item[(f)] For every $i\in\{1,\dots,n\}$ and $j\in\{1,\dots,n_i\}$, the jump across the Stokes line $\mathcal{L}_{\delta_{i,j}}$
 of level $K_i$  is
 given by
 \begin{equation*}
  J_{\mathcal{L}_{\delta_{i,j}},K_i}\widehat{u}(t,0)=u_{\mathbf{l}}(t,0)-u_{\mathbf{l}'}(t,0)=u_i^{\delta_{i,j}^+}(t,0)-u_i^{\delta_{i,j}^-}(t,0)
  =F_{i,j}(s,0)\Big[\frac{e_{\overline{m}_i}(s/t)}{s}\Big],
 \end{equation*}
 where $u_i^{\delta_{i,j}^{\pm}}= \mathcal{S}_{K_i,\delta_{i,j}^{\pm}}\widehat{u}_i$, $\overline{m}_i$ is a moment function of order $1/K_i$,
 $F_{i,j}(s,0)$ is a hyperfunction
 on $\mathcal{L}_{\delta_{i,j}}$ defined by
 $F_{i,j}(s,0):=[v_i(s,0)]_{\delta_{i,j}}$,
 $v_i(s,z):=\Bo_{\overline{m}_i}\widehat{u}_i(s,z)$ and $\mathbf{l},\mathbf{l}'\in \mathcal{J}$ satisfy
 $l_i'=j-1$ in the case when $l_i=j$ and $j>1$,
 $l_i'=n_i$ in the case when $l_i=1$,
 and $l_{\alpha}=l_{\alpha}'$ for $\alpha\neq i$.
\end{enumerate}
Moreover, under the additional condition that $\widehat{u}$ is a normalised formal solution of (\ref{eq:pde}) with $m_2(u)=\Gamma(1+u)$
(i.e. when $\widehat{u}$ is a normalised formal solution of the Cauchy problem
$P(\partial_{m_1,t},\partial_z)u=0$, $\partial_{m_1,t}^ju(0,z)=\varphi_j(z)$ for
$j=0,\dots,N-1$), we may replace the assertion (f) by
\begin{enumerate}
 \item[(f')] For every $i\in\{1,\dots,n\}$, $j\in\{1,\dots,n_i\}$ and $z\in D$, the jump across the Stokes line $\mathcal{L}_{\delta_{i,j}}$
 of level $K_i$  is given by
 \begin{equation*}
  J_{\mathcal{L}_{\delta_{i,j}},K_i}\widehat{u}(t,z)=u_{\mathbf{l}}(t,z)-u_{\mathbf{l}'}(t,z)=u_i^{\delta_{i,j}^+}(t,z)-u_i^{\delta_{i,j}^-}(t,z)
  =F_{i,j}(s,z)\Big[\frac{e_{\overline{m}_i}(s/t)}{s}\Big],
 \end{equation*}
 where $u_i^{\delta_{i,j}^{\pm}}= \mathcal{S}_{K_i,\delta_{i,j}^{\pm}}\widehat{u}_i$, $\overline{m}_i$ is a moment function of order $1/K_i$, $F_{i,j}(s,z)$ is a hyperfunction
 on $\mathcal{L}_{\delta_{i,j}(z)}$ defined by
 $F_{i,j}(s,z):=[v_i(s,z)]_{\delta_{i,j}(z)}$, $\delta_{i,j}(z):=\delta_{i,j}+q_i(\arg(z_0-z)-\arg z_0)$,
 $v_i(s,z):=\Bo_{\overline{m}_i}\widehat{u}_i(s,z)$ and $\mathbf{l},\mathbf{l}'\in \mathcal{J}$ satisfy the same conditions as in (f).
\end{enumerate}
 \end{Tw}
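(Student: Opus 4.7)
The plan is to reduce (\ref{eq:pde}) to a collection of simple moment pseudodifferential equations via Proposition \ref{pr:family}, apply Proposition \ref{pr:sum} and Theorem \ref{th:summability} to each piece to establish summability with a precise description of singular directions, and then assemble the pieces using Remark \ref{re:suma} and Theorem \ref{th:general} to obtain the multisummability statement, the maximal family, and the Stokes structure. The hyperfunction-theoretic formula for the jumps in (f) will come directly from the decomposition of jumps developed at the end of Section 5, and (f') will follow by the translation trick of Remark \ref{re:z}.

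First, I would apply Proposition \ref{pr:family} to write $\widehat{u}=\sum_{i,\alpha,\beta}\widehat{u}_{i\alpha\beta}$ where each $\widehat{u}_{i\alpha\beta}$ solves a simple pseudodifferential Cauchy problem with characteristic root $\lambda_{i\alpha}(\zeta)\sim\lambda_{i\alpha}\zeta^{q_i}$ and with Cauchy datum a linear combination $\varphi_{i\alpha\beta}$ of moment pseudodifferential images of $\varphi_0,\dots,\varphi_{N-1}$. By Proposition \ref{pr:family} the Gevrey order of $\widehat{u}_{i\alpha\beta}$ is $\overline{q}_i s_2-s_1$, which is non-positive precisely when $q_i\leq s_1/s_2$. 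Grouping these convergent pieces gives $\widehat{u}_0$ as a classical analytic solution of (\ref{eq:u_0}), while for each $i\in\{1,\dots,n\}$ grouping the terms with pole order $q_i>s_1/s_2$ yields a formal solution $\widehat{u}_i$ of (\ref{eq:u_i}) of Gevrey order exactly $1/K_i$. This gives the decomposition in (a).

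Next, for each $\widehat{u}_{i\alpha\beta}$ the Cauchy data $\varphi_{i\alpha\beta}$ inherit the singularity structure of the $\varphi_j$ (moment pseudodifferential operators preserve the ramified analyticity on $\widetilde{\CC\setminus\{z_0\}}$ with the required growth), so Theorem \ref{th:summability} applies: $\widehat{u}_{i\alpha\beta}$ is $K_i$-summable with singular directions $\{q_i\arg z_0+\tfrac{2j\pi}{\nu_i}-\arg\lambda_{i\alpha}\bmod 2Q\pi\colon 0\leq j\leq Q\nu_i-1\}$. Taking the union over $\alpha,\beta$ produces exactly $\Lambda_i=\{\delta_{i,1},\dots,\delta_{i,n_i}\}$. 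By Remark \ref{re:suma}, $\widehat{u}=\widehat{u}_0+\widehat{u}_1+\dots+\widehat{u}_n$ is then $\mathbf{K}$-multisummable with singular directions of level $K_i$ given by $\Lambda_i$, and (\ref{eq:multisum}) holds by linearity of multisummation. Assertion (e) follows from Definition \ref{df:stokes}, and one directly applies Theorem \ref{th:general} with the present $\Lambda_i$, $I_{i,j}$, $\mathcal{J}$, and ramification index $Q$ to obtain (b), (c), (d) verbatim.

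Finally, for the jump formula (f), note that when one crosses $\mathcal{L}_{\delta_{i,j}}$ only the component $\widehat{u}_i$ is affected (the other summands are analytic across this line by Remark \ref{re:suma}), so the jump reduces to $u_i^{\delta_{i,j}^+}-u_i^{\delta_{i,j}^-}=(T_{\overline{m}_i,\delta_{i,j}^+}-T_{\overline{m}_i,\delta_{i,j}^-})\Bo_{\overline{m}_i}\widehat{u}_i$. Evaluating at $z=0$ and using the hyperfunction/K\"othe identification from Section 5 yields the stated representation through $F_{i,j}(s,0):=[v_i(s,0)]_{\delta_{i,j}}\in\mathcal{H}^{q_iK_i}(\mathcal{L}_{\delta_{i,j}})$. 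For (f'), the assumption $m_2(u)=\Gamma(1+u)$ means $\partial_{m_2,z}=\partial_z$, and Remark \ref{re:z} allows the translation $z\mapsto z+s$: the translated formal series $\widehat{u}_{i,z}(t,s):=\widehat{u}_i(t,s+z)$ solves the analogous problem with singularity at $z_0-z$, so the Stokes line at $\delta_{i,j}$ moves to $\delta_{i,j}(z)=\delta_{i,j}+q_i(\arg(z_0-z)-\arg z_0)$ and the defining function becomes $v_i(s,z)$.

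The principal technical obstacle will be the bookkeeping of singular directions modulo the common period $2Q\pi$: one must verify that $Q=LCM(\mu_1,\dots,\mu_n)/GCD(\nu_1,\dots,\nu_n)$ is indeed the smallest positive rational such that the multisum of $\widehat{u}$ is single-valued on $W$, and that forming the union $\bigcup_{\alpha}\Lambda_i^{(\alpha)}$ of singular directions coming from different $\lambda_{i\alpha}$ (with different arguments but the same pole order $q_i$) yields each $\delta_{i,j}$ with the correct multiplicity for Theorem \ref{th:general} to apply. Once this combinatorial step is done, the rest of the argument is a direct assembly of Proposition \ref{pr:family}, Theorem \ref{th:summability}, Remark \ref{re:suma}, Theorem \ref{th:general}, and Remark \ref{re:z}.
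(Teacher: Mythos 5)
Your proposal is correct and follows essentially the same route as the paper: decomposition via Proposition \ref{pr:family} into simple pseudodifferential pieces, $K_i$-summability of each piece with the stated singular directions via Proposition \ref{pr:sum} and Theorem \ref{th:summability}, assembly by Remark \ref{re:suma} and Theorem \ref{th:general} for (a)--(e), and the hyperfunction jump formulas from Section 5 together with Remark \ref{re:z} for (f) and (f'). The only difference is cosmetic: the paper records the singular directions of $\widehat{u}_{i\alpha\beta}$ modulo $2\pi q_i$ and you modulo $2Q\pi$, which parametrize the same sets $\Lambda_i$.
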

 \begin{proof}
  Since $P(\lambda,\zeta)$ is given by (\ref{eq:P_multi}), by Proposition \ref{pr:family} a normalised formal solution $\widehat{u}$ of
  (\ref{eq:pde}) may be
written as $\widehat{u}=\widehat{u}_0+\widehat{u}_1+\dots+\widehat{u}_n$, where $\widehat{u}_0$ is a convergent power series solution of the
pseudodifferential equation (\ref{eq:u_0})
and $\widehat{u}_i$ is a $1/K_i$-Gevrey power series solution of (\ref{eq:u_i})
with the initial data having the same holomorphic properties as $\varphi_j(z)$.
More precisely, by Proposition \ref{pr:family} we conclude that
$\widehat{u}_i=\sum_{\alpha=1}^{l_i}\sum_{\beta=1}^{N_{i\alpha}}\widehat{u}_{i\alpha\beta}$,
where $\widehat{u}_{i\alpha\beta}$ is a formal solution of a simple pseudodifferential equation
 \begin{equation*}
    \left\{
    \begin{array}{l}
     (\partial_{m_1,t}-\lambda_{i\alpha}(\partial_{m_2,z}))^{\beta}u_{i\alpha\beta}=0\\
     \partial_{m_1,t}^j u_{i\alpha\beta}(0,z)=0\ \ (j=0,\dots,\beta-2)\\
     \partial_{m_1,t}^{\beta-1} \widehat{u}_{i\alpha\beta}(0,z)=\lambda_{i\alpha}^{\beta-1}(\partial_{m_2,z})\varphi_{i\alpha\beta}(z),
    \end{array}
    \right.
   \end{equation*}
   where $\varphi_{i\alpha\beta}(z):=\sum_{j=0}^{N-1}d_{i\alpha\beta j}(\partial_{m_2,z})
   \varphi_j(z)\in\OO_{1/\gamma}(D)$ and $d_{i\alpha\beta j}(\zeta)$ are some holomorphic
   functions of the variable $\xi=\zeta^{1/\gamma}$ and of polynomial growth. Since $\varphi_j(z)\in\OO^{q_nK_n}(\widetilde{\CC\setminus\{z_0\}})$
and $q_iK_i\leq q_nK_n$ we see that
$\varphi_{i\alpha\beta}(z)\in\Oo_{1/\gamma}^{q_iK_i}\big(\bigcup_{l=0}^{q_i\gamma-1}\widehat{S}_{(d+\arg\lambda_{\alpha}+2l\pi)/q_i}\big)$
for every direction $d\neq q_i\arg z_0 + \frac{2j\pi}{\nu_i}-\arg\lambda_{i\alpha}\mod 2\pi q_i$, $j=0,\dots,\mu-1$. 

Hence, by Proposition
\ref{pr:sum} and Theorem \ref{th:summability} we see that $\widehat{u}_{i\alpha\beta}$ is $K_i$-summable with the singular directions given by
$q_i\arg z_0 + \frac{2j\pi}{\nu_i}-\arg\lambda_{i\alpha}\mod 2\pi q_i$ for $j=0,\dots \mu_i-1$. Consequently,
$\widehat{u}$ is
$\mathbf{K}$-multisummable in any nonsingular admissible multidirection $\mathbf{d}=(d_1,\dots,d_n)$. Since a formal power series  $\widehat{u}_0$ 
is convergent, its sum $u_0$ is well defined and by Remark \ref{re:suma} we conclude that
$\mathbf{K}$-multisum $\mathcal{S}_{\mathbf{K},\mathbf{d}}\widehat{u}$ of $\widehat{u}$ is given by (\ref{eq:multisum}), so (a) holds.

Since $\widehat{u}$ is $K$-multisummable,
using Theorem \ref{th:general} we conclude that (b), (c) and (d) hold.

Since the set of singular directions of order $K_i$ is given by $\Lambda_i$, we get the description of Stokes lines $\mathcal{L}_{\delta_{i,j}}$
and anti-Stokes lines $\mathcal{L}_{\delta_{i,j}\pm\frac{\pi}{2K_i}}$ of level $K_i$ for $\delta_{i,j}\in\Lambda_i$ and $i=1,\dots,n$,
so (e) is also satisfied.

Finally, to obtain (f) by Theorem \ref{th:summability} we calculate the jumps for $\widehat{u}$ across
the Stokes lines $\mathcal{L}_{d_i}$ of level $K_i$. Using Remark \ref{re:z}, we get (f').
 \end{proof}

 Let us illustrate our theory on the following simple example.
 
 \begin{Prz}
  Let $\widehat{u}$ be a formal solution of the Cauchy problem
 \begin{equation}
 \label{eq:ex}
  \begin{cases}
   (\partial_t-\partial_z^2)(\partial_t-\partial_z^3)\widehat{u}=0,\\
   u(0,z)=\varphi_1(z)\in\OO^{3/2}(\widetilde{\CC\setminus\{z_0\}}),\\
   \partial_t u(0,z)=\varphi_2(z)\in\OO^{3/2}(\widetilde{\CC\setminus\{z_0\}}),
  \end{cases}
\end{equation}
where $\arg z_0\in(-\frac{\pi}{2},\frac{\pi}{2})$.

 Then by Proposition \ref{pr:family} (see also \cite[Theorem 1]{Mic8}) $\widehat{u}=\widehat{u}_1+\widehat{u}_2$,
 where $\widehat{u}_1$, $\widehat{u}_2$ are formal solutions of
 $$
 (\partial_t-\partial_{z}^2)u_1=0,\quad u_1(0,z)=c_{11}(\partial_z)\varphi_1(z)+c_{12}(\partial_z)\varphi_2(z)=:\tilde{\varphi}_1(z),
 $$
 $$
 (\partial_t-\partial_{z}^3)u_2=0,\quad u_2(0,z)=c_{21}(\partial_z)\varphi_1(z)+c_{22}(\partial_z)\varphi_2(z)=:\tilde{\varphi}_2(z),
 $$
 where $c_{ij}(\partial_z)$, $i,j=1,2$, are pseudodifferential operators defined by $c_{11}(\zeta):=\frac{\zeta}{\zeta-1}$,
 $c_{12}(\zeta):=\frac{1}{\zeta^2-\zeta^3}$, $c_{21}(\zeta):=\frac{1}{1-\zeta}$ and $c_{22}(\zeta):=\frac{1}{\zeta^3-\zeta^2}$.
 \par
 Now we are ready to describe the Stokes phenomenon and the maximal family of solutions of (\ref{eq:ex}). 
 By Theorem \ref{th:summability}, $\widehat{u}_1$ is $1$-summable with singular directions $d_{1,l}:=2\arg z_0+2l\pi$
 and $\widehat{u}_2$ is $1/2$-summable with singular directions $d_{2,l}:=3\arg z_0+2l\pi$ for $l\in\ZZ$.
 Hence $\widehat{u}$ is $(1,1/2)$-summable, the set of singular directions (modulo $12\pi$) of level $1$ is given by $\{d_{1,0},\dots,d_{1,5})$
 and the set of singular directions (modulo $12\pi$) of level $1/2$ is given by $\{d_{2,0},\dots,d_{2,5})$.
 It means that $\mathcal{L}_{d_{1,l}}$ and $\mathcal{L}_{d_{1,l}\pm\frac{\pi}{2}}$ are respectively Stokes and anti-Stokes lines of level $1$,
 and analogously $\mathcal{L}_{d_{2,l}}$ and $\mathcal{L}_{d_{2,l}\pm\pi}$ are respectively Stokes and anti-Stokes lines of level $1/2$
 ($l=0,\dots,5$).

  Next, let $I_{1,j}:=(-\frac{\pi}{2}+2\pi j +2\arg z_0, \frac{\pi}{2}+2\pi(j+1)+2\arg z_0)$ for $j=0,\dots,5$ and
  $I_{2,k}:=(-\pi+2\pi k +3\arg z_0, \pi+2\pi(k+1)+3\arg z_0)$ for $k=0,\dots,5$.
  Since $\arg z_0\in(-\frac{\pi}{2},\frac{\pi}{2})$, we conclude that 
  $$|I_{1,j}\cap I_{2,k}|=
  \begin{cases}
   3\pi &|j-k|=0\\
   \frac{3}{2}\pi\pm\arg z_0 & j-k=\pm 1\\
   0 & |j-k|>1.
  \end{cases}
  $$
  
 Hence $\mathcal{J}=\{(j,k)\colon\ 0\leq j,k\leq 5,\ |j-k|\leq 1\}$ and $\{u_{(j,k)}\}_{(j,k)\in\mathcal{J}}$ is a maximal family of solutions of
 (\ref{eq:ex}) on the Riemann surface of $t\mapsto t^{\frac{1}{6}}$, where $u_{(j,k)}:=u_{1,j}+u_{2,k}$,
 $u_{1,j}:=u_1^d=\mathcal{S}_{1,d}\widehat{u}_1$ for $d\in (d_{1,j},d_{1,j+1})=(2\arg z_0+2\pi j, 2\arg z_0+2\pi(j+1))$ and
 $u_{2,k}:=u_2^d=\mathcal{S}_{2,k}\widehat{u}_2$ for $d\in (d_{2,j},d_{2,j+1})=(3\arg z_0+2\pi k, 3\arg z_0+2\pi(k+1))$.
 
 Using \cite{Mic-Pod} and \cite{Tk} we are also able to calculate the jumps across the Stokes lines. Namely
 \begin{multline*}
 J_{\mathcal{L}_{d_{1,j},1}}\widehat{u}(t,z)= u_{(j,j)}(t,z)-u_{(j-1,j)}(t,z)=u_{1,j}(t,z)-u_{1,j-1}(t,z)\\
 =J_{\mathcal{L}_{d_{1,j}}}\widehat{u}_1=
 u_1^{d_{1,j}^+}(t,z)-u_1^{d_{1,j}^-}(t,z)=F_{1,z}(s)[\frac{1}{\sqrt{4\pi t}}e^{-\frac{s^2}{4t}}],
 \end{multline*}
 where $F_{1,z}(s):=[\tilde{\varphi}_1(z+s)]_{\arg(z_0-z)}$ is a hyperfunction on $\{s\in\tilde{\CC}\colon \arg s = \arg(z_0-z)\}$.
 
 Analogously
 \begin{multline*}
 J_{\mathcal{L}_{d_{2,j},1/2}}\widehat{u}(t,z)= u_{(j,j)}(t,z)-u_{(j,j-1)}(t,z)=u_{2,j}(t,z)-u_{2,j-1}(t,z)\\
 =J_{\mathcal{L}_{d_{1,j}}}\widehat{u}_2=
 u_2^{d_{2,j}^+}(t,z)-u_2^{d_{2,j}^-}(t,z)=F_{2,z}(s)[\frac{1}{3\sqrt[3]{t}}C_3(s/\sqrt[3]{t})],
 \end{multline*}
 where $F_{2,z}(s):=[\tilde{\varphi}_2(z+s)]_{\arg(z_0-z)}$ is a hyperfunction on $\{s\in\tilde{\CC}\colon \arg s = \arg(z_0-z)\}$
 and $C_3(\tau)$ is the Ecalle kernel defined by $C_3(\tau):=\sum_{n=0}^{\infty}\frac{(-\tau)^n}{n!\Gamma(1-\frac{n+1}{3})}$.
 \end{Prz}

\section{Moment partial differential equations --- special cases}
In this section we will consider certain special cases of moment partial differential equations. We derive Stokes lines and jumps across these Stokes
lines in terms of hyperfunctions.
\bigskip

\emph{Case 1.} Let us consider the following equation
$$\left\{ \begin{array}{ll} \partial_{m_1,t}^{p}u(t,z)=\partial_{z}^{q}u(t,z)\,\,\,\mathrm{with}\ 0< ps_1<q,\\ 
u(0,z)=\varphi(z),\,\, \\ 
\partial_{m_1,t}^{j}u(0,z)=0,\,\, \textrm{for}\,\,\, j=1,2,\dots,p-1
\end{array}\right.$$
with $\varphi(z)\in\OO^{\frac{q}{q-ps_1}}\Bigl(\widetilde{\CC\setminus\{z_0\}}\Bigr)$ for some $z_0\in\CC\setminus\{0\}$;
where $m_1$ is a moment function of order $s_1>0$ corresponding to a kernel function $e_{m_1}(z)$ of order $1/s_1$.

The above Cauchy problem has a unique formal solution $$\widehat{u}(t,z)=\sum_{n=0}^{\infty}\frac{\varphi^{(qn)}(z)}{m_1(pn)}\* t^{pn},$$ 
to which we first apply the $m$-moment Borel transform. We obtain
\begin{multline*}
(\Bo_m\widehat{u})(t,z)=\sum_{n=0}^{\infty}\frac{\varphi^{(qn)}(z)}{m_1(pn)}\cdot\frac{m_1(pn)}{\Gamma(1+\frac{q}{p}\cdot pn)}
\*t^{(\frac{p}{q})\*qn}=\sum_{n=0}^{\infty}\frac{\varphi^{(qn)}(z)}{(qn)!}\*t^{(\frac{p}{q})\*qn}\\
=\frac{1}{q}\*\bigl(\varphi(z+\sqrt[q]{t^p})+\varphi(z+e^{\frac{2\pi\*i}{q}}\*\sqrt[q]{t^{p}})+\dots+\varphi(z+e^{\frac{2(q-1)\pi\*i}{q}}
\*\sqrt[q]{t^p})\bigr),
\end{multline*}
where $m(n):=\frac{\Gamma(1+\frac{q}{p}n)}{m_1(n)}=\frac{\Gamma_{\frac{q}{p}}(n)}{m_1(n)}$ is a moment function of order $\frac{q}{p}-s_1$ corresponding to a kernel function $e_m(z)$ of order $\frac{p}{q-ps_1}.$

Let $f(s,z):=(\Bo_m\widehat{u})(s,z)$, then by using $m$-moment Laplace transform in a nonsingular direction $d$ we get 
\begin{multline*} (T_{m,d}f)(t,z)=\int_{e^{id}\RR_+}e_m(s/t)\*f(s,z)\frac{ds}{s}\\
 =\frac{1}{q}\int_{e^{id}\RR_+}e_m(s/t)\*\*\biggl(\varphi(z+\sqrt[q]{s^p})+\varphi(z+e^{\frac{2\pi\*i}{q}}\*\sqrt[q]{s^{p}})+\dots+\varphi(z+e^{\frac{2(q-1)\pi\*i}{q}}\*\sqrt[q]{s^p})\biggr)\frac{ds}{s}.
 \end{multline*}

Thus, by Proposition \ref{pr:sum}, the unique formal solution $\widehat{u}(t,z)$ of this Cauchy problem is $\frac{p}{q-ps_1}$-summable in the direction $d$ and for every $\varepsilon>0$ there exists $r>0$ such that
its $\frac{p}{q-ps_1}$-sum $u\in\OO(S_{d}(\frac{\pi(q-ps_1)}{p}-{\varepsilon},r)\times D)$ is given by 
\begin{multline*}
u(t,z)=u^{d}(t,z)\\=\frac{1}{q}\*\int_{e^{i d}\RR_+}e_m(s/t)\*\*\biggl(\varphi(z+\sqrt[q]{s^p})+\varphi(z+e^{\frac{2\pi\*i}{q}}\*\sqrt[q]{s^{p}})+\dots+\varphi(z+e^{\frac{2(q-1)\pi\*i}{q}}\*\sqrt[q]{s^p})\biggr)\frac{ds}{s}.
\end{multline*}

Let $\theta:=\arg z_0$, $\delta:=\frac{q\theta}{p}$. 
Then  $\mathcal{L}_{\delta+\frac{2\pi j}{p}}$   ($j=0,1,\ldots,p-1$) are Stokes lines for $\widehat{u}$.
For every sufficiently small $\varepsilon>0$ there exists $r>0$ such that for every fixed $z\in D_r$ the jump is given by 
\begin{multline*}
J_{\mathcal{L}_{\delta}}\widehat u(t,z)= u^{\delta+\varepsilon}(t,z) - u^{\delta-\varepsilon}(t,z)=F_z(s)\bigg[\frac{e_m(s/t)}{s}\bigg]\\
=\biggl[\varphi(z+\sqrt[q]{s^p})+\varphi(z+e^{\frac{2\pi\*i}{q}}\*\sqrt[q]{s^{p}})+\dots+\varphi(z+e^{\frac{2(q-1)\pi\*i}{q}}
\*\sqrt[q]{s^p})\biggr]_{\frac{q\theta_z}{p}}\biggl[\frac{e_m(s/t)}{qs}\biggr]\\
=\biggl[\varphi(z+\sqrt[q]{s^p})\biggr]_{\frac{q\theta_z}{p}}\biggl[\frac{e_m(s/t)}{qs}\biggr]
\end{multline*}
with $\theta_z=\arg(z_0-z)$.
The last equality arising from the fact that in this case all singular points appear in the function $s\mapsto\varphi(z+\sqrt[q]{s^p})$.

Observe that from \cite[Theorem 32]{B2} one can derive the function
$$e_m(u)=T^{-}_{m_1,d}\,\bigg(e_{m_2}\big(1/z\big)\bigg)(1/u)=-\frac{1}{2\pi i}\int_{\gamma(d)}E_{m_1}\bigg(\frac{1}{uz}\bigg)\frac{p}{q}\bigg(\frac{1}{z}\bigg)^{\frac{p}{q}}e^{-\big(\frac{1}{z}\big)^{\frac{p}{q}}}\frac{dz}{z},
$$
where $E_{m_1}\big(\frac{1}{uz}\big)=\sum_{n=0}^\infty \frac{\big(\frac{1}{uz}\big)^n}{m_1(n)}$, $m_2(n)=\Gamma(1+\frac{q}{p}n)$ and,
by Example \ref{ex:functions}, $e_{m_2}(z)=\frac{p}{q}z^{\frac{p}{q}}e^{-z^{\frac{p}{q}}}$.
\bigskip

\emph{Case 2.}
Let us now study the formal solution $$\widehat{u}(t,z)=\sum_{n=0}^{\infty}\frac{\partial_{m_2,z}^n\varphi (z)}{m_1(n)}t^{n}$$ of the following equation
$$\left\{ \begin{array}{ll} \partial_{m_1,t}u(t,z)=\partial_{m_2,z}u(t,z),\\
u(0,z)=\varphi(z)
\end{array}\right.$$
with $\varphi(z)\in\OO^{\frac{1}{s_2-s_1}}\Bigl(\widetilde{\CC\setminus\{z_0\}}\Bigr)$ for some $z_0\in\CC\setminus\{0\}$; where $m_1$ is a moment function of order $s_1>0$ corresponding to a kernel function $e_{m_1}(z)$ of order $1/s_1$, $m_2$ is a moment function of order $s_2>0$ corresponding to a kernel function $e_{m_2}(z)$ of order $1/s_2 $ and $s_2>s_1$.\\
First, we apply to $\widehat u(t,z)$ the $m$-moment Borel transform
$$(\Bo_m\widehat{u})(t,z)=\Bo_m\bigg(\sum_{n=0}^{\infty}\frac{\partial_{m_2,z}^n\varphi (z)}{m_1(n)}\* t^{n}\bigg)= 
\sum_{n=0}^{\infty}\frac{\partial_{m_2,z}^n\varphi (z)}{m_2(n)}\* t^{n},$$
where $m(n):=m_2(n)/m_1(n)$ is a moment function of order $s_2-s_1$ corresponding to a kernel function $e_m(z)$ of order $k:=\frac{1}{s_2-s_1}$.

Using \cite[Proposition 3]{Mic7} we see that for $|z|<\varepsilon<r$ and $n\in\NN$ we have
$$\partial_{m_2,z}^n\varphi (z)=\frac{1}{2\pi i}\oint_{|w|=\varepsilon}\varphi(w)\int_0^{\infty(\psi)}\zeta^n E_{m_2}(z\zeta)\frac{e_{m_2}(w\zeta)}{w\zeta}d\zeta dw,$$
where $\psi\in(-\arg w -\frac{\pi s_2}{2},-\arg w+\frac{\pi s_2}{2}).$
Thus
\begin{multline*}(\Bo_m\widehat{u})(t,z)=\sum_{n=0}^{\infty}\frac{\partial_{m_2,z}^n \varphi (z)\* t^{n}}{m_2(n)}=
\sum_{n=0}^{\infty}\frac{1}{2\pi i}\oint_{|w|=\varepsilon}\varphi(w)\int_0^{\infty(\psi)}\frac{\zeta^n t^n}{m_2(n)}E_{m_2}(z\zeta)
\frac{e_{m_2}(w\zeta)}{w\zeta}d\zeta dw\\
= \frac{1}{2\pi i}\oint_{|w|=\varepsilon}\varphi(w)\int_0^{\infty(\psi)}E_{m_2}(t\zeta)E_{m_2}(z\zeta)\frac{e_{m_2}(w\zeta)}{w\zeta}d\zeta dw.
\end{multline*}

Let $f(s,z):=(\Bo_m\widehat{u})(s,z)$, then by using $m$-moment Laplace transform in a nonsingular direction $d$ we get 
\begin{multline*} (T_{m,d}f)(t,z)=\int_{e^{id}\RR_+}e_m(s/t)\*f(s,z)\frac{ds}{s}\\
 =\int_{e^{id}\RR_+}e_m(s/t)\bigg(\frac{1}{2\pi i}\oint_{|w|=\varepsilon}\varphi(w)\int_0^{\infty(\psi)}E_{m_2}(s\zeta)E_{m_2}(z\zeta)\frac{e_{m_2}(w\zeta)}{w\zeta}d\zeta dw\bigg) \frac{ds}{s}.
 \end{multline*}
Notice that, by \cite[Theorem 32]{B2}, the function $e_m(u)$ is of the form
\begin{equation}
\label{eq: c2 e_m}
e_m(u)=T^{-}_{m_1,d}\,\bigg(e_{m_2}\big(1/z\big)\bigg)(1/u)=-\frac{1}{2\pi i}\int_{\gamma(d)}E_{m_1}\bigg(\frac{1}{uz}\bigg)e_{m_2}(1/z)\frac{dz}{z},
\end{equation}
where $E_{m_1}\big(\frac{1}{uz}\big)=\sum_{n=0}^\infty \frac{\big(\frac{1}{uz}\big)^n}{m_1(n)}$.

Thus, by Proposition \ref{pr:sum}, the unique formal solution $\widehat{u}(t,z)$ of this Cauchy problem is $k$-summable in the direction $d$
and for every $\varepsilon>0$ there exists $r>0$ such that
its $k$-sum $u\in\OO(S_{d}(\frac{\pi}{k}-{\varepsilon},r)\times D)$ is given by 
\begin{multline*}
u(t,z)=u^{ d}(t,z)\\=\int_{e^{i d}\RR_+}e_m(s/t)\bigg(\frac{1}{2\pi i}\oint_{|w|=\varepsilon}\varphi(w)\int_0^{\infty(\psi)}E_{m_2}(s\zeta)E_{m_2}(z\zeta)\frac{e_{m_2}(w\zeta)}{w\zeta}d\zeta dw\bigg)\frac{ds}{s}.
\end{multline*}

Then $\mathcal{L}_{\delta}$, with $\delta=\theta:=\arg z_0$, is a Stokes line for $\widehat{u}$. For $z=0$ the jump is given by 
\begin{multline*}
J_{\mathcal{L}_{\delta}}\widehat u(t,0)= u^{\delta^+}(t,0) - u^{\delta^-}(t,0)=F_0(s)\bigg[\frac{e_m(s/t)}{s}\bigg]\\
=\biggl[\frac{1}{2\pi i}\oint_{|w|=\varepsilon}\varphi(w)\int_0^{\infty(\psi)}E_{m_2}(s\zeta)\frac{e_{m_2}(w\zeta)}{w\zeta}d\zeta dw\biggr]_{\theta}\biggl[\frac{e_m(s/t)}{s}\biggr].
\end{multline*}
Using \cite[formula (5.15)]{B2} one can derive
$$\int_0^{\infty(\psi)}E_{m_2}(s\zeta)\frac{e_{m_2}(w\zeta)}{w\zeta}d\zeta=\frac{1}{w-s},$$
hence 
$$ J_{\mathcal{L}_{\delta}}\widehat u(t,0)=\biggl[\frac{1}{2\pi i}\oint_{|w|=\varepsilon}\frac{\varphi(w)}{w-s} dw\biggr]_{\theta}\biggl[\frac{e_m(s/t)}{s}\biggr] =\bigg[\varphi(s)\bigg]_{\theta}\biggl[\frac{e_m(s/t)}{s}\biggr],$$
where the last equality follows from the Cauchy integral formula.
\bigskip

\emph{Case 3.}
Now, we take the following equation under consideration
$$\left\{ \begin{array}{ll} \partial_{m_1,t}^q u(t,z)=\partial_{m_2,z}^q u(t,z),\\
u(0,z)=\varphi(z),\\
\partial_{m_1,t}^j u(0,z)=0,\,\, \mathrm{for}\,\, j-1,2,\dots,q-1
\end{array}\right.$$
with $\varphi(z)\in\OO^{\frac{1}{s_2-s_1}}(\widetilde{\CC\setminus\{z_0\}})$ for some $z_0\in\CC\setminus\{0\}$; where $m_1$ is a moment function of order $s_1>0$ corresponding to a kernel function $e_{m_1}(z)$ of order $1/s_1$, $m_2$ is a moment function of order $s_2>0$ corresponding to a kernel function $e_{m_2}(z)$ of order $1/s_2 $ and $s_2>s_1$.\\
 Observe that since 
 $$\partial_{m_1,t}^q-\partial_{m_2,z}^q=(\partial_{m_1,t}-\partial_{m_2,z})(\partial_{m_1,t}-e^{\frac{2\pi i}{q}}\partial_{m_2,z})\cdot\ldots\cdot(\partial_{m_1,t}-e^{\frac{2\pi i(q-1)}{q}}\partial_{m_2,z}),$$
then we can write
$$\widehat{u}(t,z)=\widehat{u}_0(t,z)+\widehat{u}_1(t,z)+\ldots+\widehat{u}_{q-1}(t,z),$$
where, for $j=0,1,\ldots,q-1$
$$\widehat{u}_j(t,z)=\frac{1}{q}\sum_{n=0}^{\infty}\frac{\partial_{m_2,z}^{n}\varphi (z)}{m_1(n)}\big(e^{\frac{2\pi ij}{q}}\big)^n t^{n},$$
is a formal solution of the equation
$$\left\{ \begin{array}{ll} \partial_{m_1,t} u_j(t,z)=e^{\frac{2\pi ij}{q}}\partial_{m_2,z} u_j(t,z),\\
u_j(0,z)=\frac{1}{q}\varphi(z)\in\OO^{\frac{1}{s_2-s_1}}(\widetilde{\CC\setminus\{z_0\}}).
\end{array}\right.$$
Notice that, based on reasoning of the case 2, for each $\widehat u_j(t,z)$ we obtain that
$$
u_j(t,z)=\int_{e^{i d}\RR_+}e_m(s/t)\bigg(\frac{1}{2\pi i}\oint_{|w|=\varepsilon}\varphi(w)
\int_0^{\infty(\psi)}E_{m_2}(se^{\frac{2\pi ij}{q}} \zeta)E_{m_2}(z\zeta)\frac{e_{m_2}(w\zeta)}{w\zeta}d\zeta dw\bigg)\frac{ds}{s}.$$
So $\mathcal{L}_{\delta+\frac{2\pi j}{q}}$ are Stokes lines for $\widehat{u}(t,z)$, where $\delta=\theta=\arg z_0$ and $j=0,\dots,q-1$.
Moreover
\begin{multline*} J_{\mathcal{L}_{\delta}}\widehat u_j(t,0)=\biggl[\frac{1}{2\pi i}\oint_{|w|=\varepsilon}\frac{\varphi(w)}{w-se^{\frac{2\pi ij}{q}}} dw\biggr]_{\theta}\biggl[\frac{e_m(s/t)}{qs}\biggr] =\bigg[\varphi\big(se^{\frac{2\pi ij}{q}}\big)\bigg]_{\theta}\biggl[\frac{e_m(s/t)}{qs}\biggr]\\
=\begin{cases} \bigg[\varphi(s)\bigg]_{\theta}\biggl[\frac{e_m(s/t)}{qs}\biggr],\,\, \mathrm{for}\,\,j=0\\
0, \,\,\mathrm{for}\,\,j=1,2,\ldots,q-1,
\end{cases}
\end{multline*}
where $e_m$ is given by (\ref{eq: c2 e_m}). Thus 
$$J_{\mathcal{L}_{\delta}}\widehat u(t,0)=\bigg[\sum_{j=0}^{q-1}\varphi\big(se^{\frac{2\pi ij}{q}}\big)\bigg]_{\theta}\biggl[\frac{e_m(s/t)}{qs}\biggr]=\bigg[\varphi(s)\bigg]_{\theta}\biggl[\frac{e_m(s/t)}{qs}\biggr].
$$
\bigskip

\emph{Case 4.} In this part we will study more general case i.e.
\begin{equation}
\label{eq:18}
\left\{ \begin{array}{ll} \partial_{m_1,t}^{p}u(t,z)=\partial_{m_2,z}^{q}u(t,z), \,\,\, \mathrm{with}\,\, 0< ps_1<qs_2,\\
u(0,z)=\varphi(z),\,\, \\ 
\partial_{m_1,t}^{j}u(0,z)=0,\,\, \textrm{for}\,\,\, j=1,2,\dots,p-1
\end{array}\right.
\end{equation}
with $\varphi(z)\in\OO^{\frac{q}{qs_2-ps_1}}\Bigl(\widetilde{\CC\setminus\{z_0\}}\Bigr)$ for some $z_0\in\CC\setminus\{0\}$; where $m_1$ is a moment function of order $s_1>0$ corresponding to a kernel function $e_{m_1}(z)$ of order $1/s_1,$ $m_2$ is a moment function of order $s_2>0$ corresponding to a kernel function $e_{m_2}(z)$ of order $1/s_2.$\\
The above Cauchy problem has a formal solution 
$$\widehat{u}(t,z)=\sum_{n=0}^{\infty}\frac{\partial_{m_2,z}^{qn}\varphi(z)}{m_1(pn)}\* t^{pn}.$$
By Theorem \ref{th:summability}, $\mathcal{L}_{\delta+\frac{2\pi j}{p}}$ are Stokes lines for $\widehat{u}(t,z)$, where $\delta=\frac{q}{p}\theta$,
$\theta:=\arg z_0$ and $j=0,\dots,p-1$. To calculate the jumps across Stokes lines
assume that $v(t,z):=u(t^{\frac{q}{p}},z)$ and $\tilde m_1(n):=m_1\big(\frac{pn}{q}\big)$ is a moment function of order $\frac{s_1p}{q}>0$
corresponding
to a kernel function $e_{\tilde m_1}(z)=\frac{q}{p} e_{m_1}\big(z^{\frac{q}{p}}\big)$ of order $\frac{q}{s_1p}$.
Then $$\widehat v(t,z)=\sum_{n=0}^{\infty}\frac{\partial_{m_2,z}^{qn}\varphi(z)}{\tilde m_1(qn)}\* t^{qn}$$
is a formal solution of the equation 
\begin{equation}
\label{eq:19}
\left\{ \begin{array}{ll} \partial_{\tilde m_1,t}^{q}v(t,z)=\partial_{m_2,z}^{q}v(t,z),\\
v(0,z)=\varphi(z)\in\OO^{\frac{q}{qs_2-ps_1}}(\widetilde{\CC\setminus\{z_0\}}), \\ 
\partial_{\tilde m_1,t}^{j}v(0,z)=0,\,\, \textrm{for}\,\,\, j=1,2,\dots,q-1.
\end{array}\right.
\end{equation}
Observe that $\widehat u(t,z)$ is a formal solution of the equation (\ref{eq:18}) if and only if $\widehat v(t,z)$ is a formal solution of the equation (\ref{eq:19}) (see also \cite[Lemma 3]{Mic7}).

In this case, we reduce our problem to the one we considered in the case 3. Thus, based on the obtained results we have 
$$v_j(t,z)=\int_{e^{i d}\RR_+}e_m(s/t)\bigg(\frac{1}{2\pi i}\oint_{|w|=\varepsilon}\varphi(w)\int_0^{\infty(\psi)}E_{m_2}(se^{\frac{2\pi ij}{q}} \zeta)E_{m_2}(z\zeta)\frac{e_{m_2}(w\zeta)}{w\zeta}d\zeta dw\bigg)\frac{ds}{s},$$
so 
\begin{multline*} J_{\mathcal{L}_{\theta}}\widehat v_j(t,0)=\biggl[\frac{1}{2\pi i}\oint_{|w|=\varepsilon}\frac{\varphi(w)}{w-se^{\frac{2\pi ij}{q}}} dw\biggr]_{\theta}\biggl[\frac{e_m(s/t)}{qs}\biggr] =\bigg[\varphi\big(se^{\frac{2\pi ij}{q}}\big)\bigg]_{\theta}\biggl[\frac{e_m(s/t)}{qs}\biggr]\\
=\begin{cases} \bigg[\varphi(s)\bigg]_{\theta}\biggl[\frac{e_m(s/t)}{qs}\biggr],\,\, \mathrm{for}\,\,j=0\\
0, \,\,\mathrm{for}\,\,j=1,2,\ldots,q-1,
\end{cases}
\end{multline*}
where $e_m$ is given by 
$$e_m(u)=T^{-}_{\tilde{m}_1,d}\,\bigg(e_{m_2}\big(1/z\big)\bigg)(1/u)=-\frac{1}{2\pi i}\int_{\gamma(d)}E_{\tilde{m}_1}\bigg(\frac{1}{uz}\bigg)e_{m_2}(1/z)\frac{dz}{z},$$
with $$E_{\tilde m_1}\bigg(\frac{1}{uz}\bigg)=\sum_{n=0}^\infty \frac{\big(\frac{1}{uz}\big)^n}{\tilde m_1(n)}=\sum_{n=0}^\infty \frac{\big(\frac{1}{uz}\big)^n}{{m_1}(\frac{pn}{q})}.$$
Thus 
$$J_{\mathcal{L}_{\theta}}\widehat v(t,0)=\bigg[\sum_{j=0}^{q-1}\varphi\big(se^{\frac{2\pi ij}{q}}\big)\bigg]_{\theta}\biggl[\frac{e_m(s/t)}{qs}\biggr]=\bigg[\varphi(s)\bigg]_{\theta}\biggl[\frac{e_m(s/t)}{qs}\biggr].
$$
Hence $$J_{\mathcal{L}_{\delta}}\widehat u(t,0)=J_{\mathcal{L}_{\theta}}\widehat v(t^{\frac{p}{q}},0)=\bigg[\varphi(s)\bigg]_{\theta}\biggl[\frac{e_m(s/t^{\frac{p}{q}})}{qs}\biggr].$$

\bibliographystyle{siam}
\bibliography{summa}
\end{document}